\newcommand{\N}{\mathbb N}
\newtheorem{theorem}{Theorem}[section]
\newtheorem{cor}{Corollary}[section]
\newtheorem{lemma}{Lemma}[section]
\newtheorem{rem}{Remark}[section]
\newcommand{\Z}{\mathbb Z}
\pgfplotsset{compat=1.13}
\definecolor{qqqqff}{rgb}{0,0,1}
\theoremstyle{plain}
\theoremstyle{remark}
\newtheorem*{notation}{Notation}
\definecolor{ttzzqq}{rgb}{0.2,0.6,0}
\begin{document}

\title{}

\markboth{Christian Aebi and Grant Cairns}  {Lattice Equable Quadrilaterals II}

\begin{minipage}[0.8 \textheight]{0.8 \textwidth}
 \begin{flushleft}
 \end{flushleft}
\end{minipage}

\centerline{}\bigskip

\centerline {\Large{\bf LATTICE EQUABLE QUADRILATERALS II }}\centerline{}%
\centerline {\Large{\bf - KITES, TRAPEZOIDS  \& CYCLIC QUADRILATERALS}}

\bigskip

\begin{center}
{\large CHRISTIAN AEBI  AND   GRANT CAIRNS}

\centerline{}
\end{center}

\bigskip

\textbf{Abstract.} We show that there are 4 infinite families of lattice equable kites, given by corresponding Pell or Pell-like equations, but up to Euclidean motions, there are exactly 5 lattice equable trapezoids (2 isosceles, 2 right, 1 singular) and 4 lattice equable cyclic quadrilaterals. We also show that, with one exception, the interior diagonals of lattice equable quadrilaterals are irrational.

\bigskip

\section{Introduction} 

A \emph{lattice equable quadrilateral} (LEQ for short) is a  quadrilateral whose vertices lie on the integer lattice $\Z^2$ and which is equable in the sense that its area equals its perimeter.
The general context here is the field of study related to Heronian triangles and their generalisations. Recall that a \emph{Heronian triangle}, or \emph{Heron triangle} is a triangle with integer side lengths and integer area \cite{ch,re}. Similarly, quadrilaterals with integer side lengths, integer diagonals and integer area have been studied for some time; for the rational version, see \cite{Ku} and \cite[pp.~216--221]{Di}.  \emph{Brahmagupta quadrilaterals}, which are cyclic quadrilaterals with integer side lengths, integer diagonals and integer area, have received particular attention \cite{sa,NS}. The objects of our study are somewhat different. While LEQs have integer side lengths (see \cite[Remark 2]{AC}), they do not necessarily have diagonals of integer lengths (see Theorem~\ref{T:irrat} below). On the other hand, the requirement that the vertices be lattice points is often a significant restriction.

This paper is a continuation of the work \cite{AC} which treated lattice equable parallelograms. It is shown in  \cite{AC} that lattice equable parallelograms are quite abundant, and  naturally form a forest of three infinite trees all of whose vertices  have degree 4, bar the roots. This present paper studies LEQs with other specials properties: kites, trapezoids and cyclic quadrilaterals.

Recall that a  \emph{kite} is a planar quadrilateral having two disjoint pairs of adjacent sides of equal length; see \cite[Chap.~7]{UG}.
A \emph{dart} is a concave kite.
If a lattice equable kite $Q$ has vertices $O(0,0)$, $A(x,y)$, $B(z,w)$, $C(u,v)$ and is symmetrical about the diagonal $OB$, then we can 
cut $Q$ along $OB$ and rearrange to form the parallelogram $O(0,0),A(x,y),B(z,w),C'(z-x,w-y)$, which by construction is a lattice equable parallelogram; see Figure~\ref{F:KP}. 
By the reverse procedure, a lattice equable parallelogram $OABC'$ produces a lattice equable kite  $OABC$ provided the reflection $C$ of $A$ in the diagonal $OB$ is a lattice point.
Thus lattice equable kites correspond to a particular subfamily of lattice equable parallelograms. Not all lattice equable parallelograms produce lattice equable kites; for example, as shown in Figure \ref{F:36}, the $3\times 6$ rectangle does not produce a lattice equable kite. Here the point $C=(-\frac95,\frac{12}5)$ is not a lattice point.  In fact, there is no  lattice equable kite with side lengths $3$ and $6$, as we will see from the corollary of the following theorem.

\begin{theorem}\label{Th1}
The four families \emph{K1 -- K4} given in the following table provide the list, without redundancy, of all  lattice equable kites, up to Euclidean motions. In each case, the points $O(0,0),A,B,C$ are the vertices of a lattice equable kite with axis of symmetry $OB$, and $M$ is the midpoint of the segment $AC$.
In each family, the nonnegative integers $n,i$ are solutions to a given Pell or Pell-like equation. In family K2 we require $n>1$.  

\smallskip
\noindent
\resizebox{\columnwidth}{!}{%
\begin{tabular}{c|c|c|c|c|c}
  \hline
   Case & Equation & $M$ &$A$ & $B$  & $C$ \\\hline
  \emph{K1}& $n^2-5i^2=4$ & $\frac12(n+5i)(2,1)$& $M+(2,-4)$  & $n(2,1)$  & $M-(2,-4)$  \\
  \emph{K2} & $n^2-5i^2=1$& $ (2n+5i)(2,1)$ &$M+(1,-2)$ & $4n(2,1)$  & $M-(1,-2)$ \\
\emph{K3} & $n^2-2i^2=1$& $(n+2i)(2,2) $ & $M+(2,-2)$& $ 4n(1,1)$ & $M-(2,-2)$\\
 \emph{K4} & $2n^2-i^2=1$& $ (4n+3i)(\frac32,\frac32)$ & $M+(\frac32,-\frac32)$ &$12n(1,1)$ & $M-(\frac32,-\frac32)$ \\
\hline
\end{tabular}
}
\end{theorem}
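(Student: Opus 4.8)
The plan is to work directly with the two diagonals of the kite, which are perpendicular, and to convert the three requirements — lattice, equable, and (via \cite[Remark~2]{AC}) integer side lengths — into a single Diophantine system. First I would fix $O$ at the origin and let the axis $OB$ point along a primitive lattice vector $\mathbf e=(e_1,e_2)$, so that $B=\beta\mathbf e$ for a positive integer $\beta$; writing $\mathbf f=(-e_2,e_1)$ for the primitive perpendicular and $N=e_1^2+e_2^2=|\mathbf e|^2=|\mathbf f|^2$, the cross-diagonal $AC$ is a lattice vector orthogonal to $\mathbf e$, hence an integer multiple of $\mathbf f$. Since $M$ is the foot of the perpendicular from $A$ to $OB$ and $C$ is the reflection of $A$ in $OB$, I can write $A=\alpha\mathbf e+\gamma\mathbf f$, $M=\alpha\mathbf e$, $C=\alpha\mathbf e-\gamma\mathbf f$. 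That $A,C$ be lattice points forces $2\alpha,2\gamma\in\Z$ together with the componentwise congruence $\alpha\mathbf e+\gamma\mathbf f\in\Z^2$, whose solution set splits according to the parity of $e_1,e_2$ (equivalently $N\bmod 4$).

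Next I would record the remaining conditions in these coordinates. Because $\mathbf e\perp\mathbf f$ with $|\mathbf e|^2=|\mathbf f|^2=N$, the side lengths satisfy $p^2:=|OA|^2=N(\alpha^2+\gamma^2)$ and $q^2:=|BA|^2=N((\alpha-\beta)^2+\gamma^2)$, and by \cite[Remark~2]{AC} both $p,q$ are integers, so each of $N(\alpha^2+\gamma^2)$ and $N((\alpha-\beta)^2+\gamma^2)$ is a perfect square. The kite has area $\tfrac12|OB|\,|AC|=\beta|\gamma|N$ and perimeter $2(p+q)$, so the equable condition reads
\begin{equation*}
\beta|\gamma|N=2(p+q).
\end{equation*}
Everything is thus encoded by the integers $e_1,e_2,\beta$ and the half-integers $\alpha,\gamma$ subject to these constraints.

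The heart of the argument, and the step I expect to be hardest, is to show that this system is solvable only for $N=5$ (axis direction $(2,1)$) and $N=2$ (axis direction $(1,1)$), and then to organise the solutions. I would first exploit the two perfect-square conditions to constrain $N$ and the admissible values of $\gamma$, and then substitute these into the equable relation to eliminate $p,q$; after clearing denominators this should collapse to a Pell or Pell-like equation in two new integer parameters $n,i$ (the residual freedom being the position $\alpha$ of $M$ along the axis and the scale $\beta$). The split into four families should then match the finitely many admissible pairs $(N,\gamma)$ and parity branches: $N=5$ with $|\gamma|=2$ or $1$ yielding $n^2-5i^2=4$ and $n^2-5i^2=1$ (families K1, K2), and $N=2$ with $|\gamma|=2$ or $\tfrac32$ yielding $n^2-2i^2=1$ and $2n^2-i^2=1$ (families K3, K4). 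As a sanity check on the bookkeeping, in K3 one verifies from $n^2-2i^2=1$ that $(n+2i)^2+1=2(n+i)^2$ and $(n-2i)^2+1=2(n-i)^2$, whence $p=4(n+i)$ and $q=4(n-i)$, consistent with $p+q=8n$.

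Finally I would run the construction in reverse to confirm that each Pell/Pell-like solution genuinely produces a lattice equable kite with the tabulated vertices, and then attend to the remaining bookkeeping: that the four families are pairwise inequivalent and free of redundancy under Euclidean motions, that concave (dart) kites are captured by allowing $M$ to fall outside the segment $OB$, and that the small degenerate solutions — such as the case excluded by the requirement $n>1$ in K2 — are correctly discarded. Reading off the resulting values of $p$ and $q$ across the four families then gives the list of admissible side-length pairs, from which the corollary, and in particular the nonexistence of a lattice equable kite with sides $3$ and $6$, will follow.
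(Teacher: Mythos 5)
Your coordinate framework is a genuinely different route from the paper's: the paper never works in an orthogonal frame along the axis, but instead cuts the kite along $OB$ and reassembles it into the lattice equable parallelogram $O,A,B,C'$ with $C'=B-A$, then imports the classification of such parallelograms from \cite{AC}. Your setup, the constraint system you derive, and your K3 sanity check are all correct. The problem is the step you yourself flag as the heart of the argument. Your plan is to ``first exploit the two perfect-square conditions to constrain $N$ and the admissible values of $\gamma$'' and only then bring in equability; but the perfect-square conditions alone constrain nothing, since they have solutions for infinitely many axis directions. For $N=1$ take $A=(3,4)$, $B=(6,0)$, $C=(3,-4)$; for $N=13$ take $\mathbf e=(3,2)$, $A=(0,13)$, $B=(12,8)$, $C=(12,-5)$: these are lattice kites with integer sides satisfying both square conditions (non-equable rhombi). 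So no finite list of pairs $(N,|\gamma|)$ can be extracted before equability enters; and equability by itself merely rewrites the cross-diagonal as $2|\gamma|\sqrt N=4(p+q)/(\beta\sqrt N)$, a restatement rather than an upper bound on $\gamma^2N$.

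What is missing is precisely the mechanism the paper borrows from \cite{AC}: the Vieta relations $ab=k(m^2+n^2)$, $a+b=kmn$, $\sqrt{a^2b^2-4(a+b)^2}=k(n^2-m^2)$ with $k\in\{5,8,9\}$, which give $p^2=k^2m^2n^2-4kn^2$ for the axis and hence the cancellation $q_{AC}^2=16(a+b)^2/p^2=16km^2/(km^2-4)$, in which the unbounded parameter $n$ has disappeared. This bounds the cross-diagonal to the finitely many values $q_{AC}^2\in\{80,20,32,18\}$, i.e.\ bounds your $4\gamma^2N$; each value is a sum of two squares in a unique way, which pins down the axis direction (your $N\in\{2,5\}$) and $|\gamma|$, and only then do the four Pell equations emerge. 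Unless you can reproduce a cancellation of this kind inside your frame---in effect re-proving the relevant part of the parallelogram classification---your Diophantine system remains open-ended. A secondary omission: the non-redundancy ``bookkeeping'' is not routine either; the paper needs $\gcd(a,b)\in\{3,4,5\}$ according to $k$, plus an argument excluding a K1--K2 overlap, which is exactly where the hypothesis $n>1$ in K2 comes from.
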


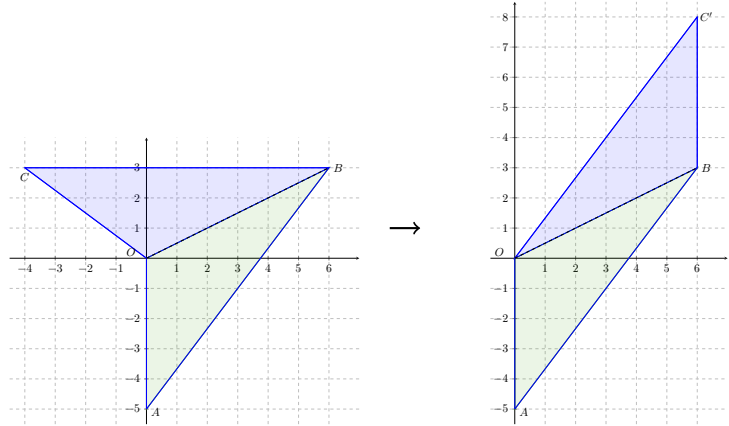
\begin{figure}
\begin{tikzpicture}[scale=.4][line cap=round,line join=round,>=triangle 45,x=1cm,y=1cm]
\begin{axis}[
x=1cm,y=1cm,
axis lines=middle,
grid style=dashed,
ymajorgrids=true,
xmajorgrids=true,
xmin=-4.5,
xmax=7,
ymin=-5.5,
ymax=4,
xtick={-4,-3,...,6},
ytick={-5,-4,...,3},]
\draw[color=ttzzqq,fill=ttzzqq,fill opacity=0.1] (0,0) --  (6,3) -- (0,-5)  -- cycle; 
\draw[color=qqqqff,fill=qqqqff,fill opacity=0.1] (0,0) --  (-4,3) -- (6,3)   -- cycle; 
\draw [line width=1pt,color=qqqqff] (6,3)-- (-4,3);
\draw [line width=1pt,color=qqqqff] (6,3)-- (0,-5);
\draw [line width=1pt,color=qqqqff] (0,0)-- (-4,3);
\draw [line width=1pt,color=qqqqff] (0,0)-- (0,-5);
\draw [line width=1pt,dashed,color=black] (0,0)-- (6,3);
\draw [color=black] (-0.5,0.2) node {$O$};
\draw [color=black] (0.3,-5.1) node {$A$};
\draw [color=black] (6.3,3) node {$B$};
\draw [color=black] (-4,2.7) node {$C$};
\end{axis}
\draw [->,line width=0.8pt] (12.5,6.5) -- (13.5,6.5);
\end{tikzpicture}
\qquad
\begin{tikzpicture}[scale=.4][line cap=round,line join=round,>=triangle 45,x=1cm,y=1cm]
\begin{axis}[
x=1cm,y=1cm,
axis lines=middle,
grid style=dashed,
ymajorgrids=true,
xmajorgrids=true,
xmin=-.8,
xmax=7,
ymin=-5.5,
ymax=8.5,
xtick={0,1,...,6},
ytick={-5,-4,...,8},]
\draw[color=ttzzqq,fill=ttzzqq,fill opacity=0.1] (0,0) -- (0,-5) -- (6,3)  -- cycle; 
\draw[color=qqqqff,fill=qqqqff,fill opacity=0.1] (0,0) --  (6,3) --  (6,8) -- cycle; 
\draw [line width=1pt,color=qqqqff] (6,3)-- (6,8);
\draw [line width=1pt,color=qqqqff] (6,3)-- (0,-5);
\draw [line width=1pt,color=qqqqff] (0,0)-- (6,8);
\draw [line width=1pt,color=qqqqff] (0,0)-- (0,-5);
\draw [line width=1pt,dashed,color=black] (0,0)-- (6,3);
\draw [color=black] (-0.5,0.2) node {$O$};
\draw [color=black] (0.3,-5.1) node {$A$};
\draw [color=black] (6.3,3) node {$B$};
\draw [color=black] (6.3,8) node {$C'$};
\end{axis}
\end{tikzpicture}
\caption{Lattice equable kites produce lattice equable parallelograms}\label{F:KP}
\end{figure}

\begin{figure}
\begin{tikzpicture}[scale=.5][line cap=round,line join=round,>=triangle 45,x=1cm,y=1cm]
\begin{axis}[
x=1cm,y=1cm,
axis lines=middle,
grid style=dashed,
ymajorgrids=true,
xmajorgrids=true,
ymin=-.5,
ymax=7,
xmin=-.5,
xmax=4,
ytick={0,1,...,6},
xtick={0,1,...,4},]
\draw[color=ttzzqq,fill=ttzzqq,fill opacity=0.1] (0,0) -- (0,6) -- (3,6) -- (3,0) -- cycle; 
\draw[line width=1pt,color=qqqqff] (0,0) -- (0,6) -- (3,6) -- (3,0) -- cycle; 
\draw [line width=1pt,dotted,color=black] (0,0)-- (3,6);
\draw [color=black] (-0.2,-0.2) node {$O$};
\draw [color=black] (3.2,.2) node {$A$};
\draw [color=black] (3.2,6.2) node {$B$};
\draw [color=black] (0.25,6.25) node {$C'$};
\end{axis}
\draw [->,line width=0.8pt] (5.5,3.5) -- (6.5,3.5);
\end{tikzpicture}
\quad 
\begin{tikzpicture}[scale=.5][line cap=round,line join=round,>=triangle 45,x=1cm,y=1cm]
\begin{axis}[
x=1cm,y=1cm,
axis lines=middle,
grid style=dashed,
ymajorgrids=true,
xmajorgrids=true,
ymin=-.5,
ymax=6.5,
xmin=-2.5,
xmax=5,
ytick={0,1,...,6},
xtick={-2,-1,...,5},]
\draw[color=ttzzqq,fill=ttzzqq,fill opacity=0.1] (0,0) --  (-9/5,12/5) --(3,6) -- (3,0) -- cycle; 
\draw[line width=1pt,color=qqqqff] (0,0) --  (-9/5,12/5) --(3,6) -- (3,0) -- cycle; 
\draw [line width=1pt,dotted,color=black] (0,0)-- (3,6);
\draw [color=black] (-0.2,-0.2) node {$O$};
\draw [color=black] (3.2,.2) node {$A$};
\draw [color=black] (3.2,6.2) node {$B$};
\draw [color=black] (-2,2) node {$C$};
\draw[line width=.5pt] (1.5,3) circle (3.354);
\end{axis}
\end{tikzpicture}
\caption{A lattice equable parallelogram that fails to produce a lattice equable kite}\label{F:36}
\end{figure}
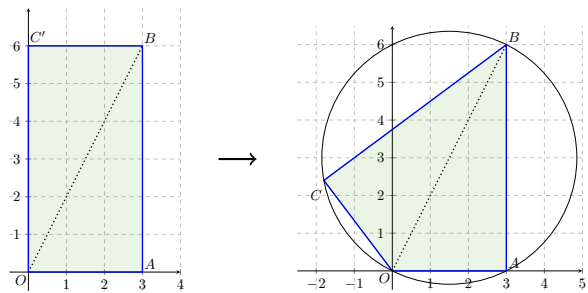

\begin{cor}\label{co}
Up to Euclidean motions, there are only three convex lattice equable kites. They are:
\begin{enumerate}
\item[(a)]  The rhombus of side length 5 of Figure~\ref{F:rhom}.
\item[(b)]  The $4\times 4$ square.
\item[(c)]  The lattice equable kite with side lengths 3 and 15 shown in Figure~\ref{F:315}.
\end{enumerate}
\end{cor}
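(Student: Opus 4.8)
The plan is to read convexity directly off the data in Theorem~\ref{Th1} and then, in each family, solve the resulting inequality simultaneously with the governing Pell equation. The starting point is a convexity criterion for kites: since $M$ is the midpoint of the cross-diagonal $AC$, lies on the axis $OB$, and $AC\perp OB$, the two diagonals cross inside the quadrilateral precisely when $M$ lies strictly between $O$ and $B$ on the segment $OB$; otherwise $O$ and $B$ fall on the same side of the line $AC$ and the kite is a dart. In every family $M$ and $B$ are \emph{positive} scalar multiples of one fixed direction vector (namely $(2,1)$ in K1 and K2, and $(1,1)$ in K3 and K4, after rewriting $(2,2)=2(1,1)$ and $(\tfrac32,\tfrac32)=\tfrac32(1,1)$), so "$M$ between $O$ and $B$" reduces to a single inequality comparing the two scalars.

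Concretely, I would record these inequalities: for K1, $\tfrac12(n+5i)<n$, i.e. $5i<n$; for K2, $2n+5i<4n$, i.e. $5i<2n$; for K3, $n+2i<2n$, i.e. $2i<n$; and for K4, $\tfrac32(4n+3i)<12n$, i.e. $3i<4n$. Each is an inequality between nonnegative integers, so I can square it and substitute the corresponding Pell relation to eliminate a variable. The effect is uniform: the Pell equation turns each convexity inequality into a bound forcing a minimal solution. In K1, $25i^2<n^2=5i^2+4$ gives $20i^2<4$, hence $i=0$, $n=2$; in K2, $25i^2<4n^2=4(5i^2+1)$ gives $5i^2<4$, hence $i=0$, $n=1$, which is excluded by the standing hypothesis $n>1$, so K2 contributes nothing; in K3, $4i^2<n^2=2i^2+1$ gives $i=0$, $n=1$; and in K4, substituting $i^2=2n^2-1$ into $9i^2<16n^2$ gives $2n^2<9$, hence $n=1$, $i=1$.

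Finally I would feed the three surviving solutions back into the table to identify the kites. K1 with $(n,i)=(2,0)$ yields a quadrilateral all of whose sides have length $5$, the rhombus of part (a); K3 with $(n,i)=(1,0)$ yields the $4\times4$ square of part (b); and K4 with $(n,i)=(1,1)$ yields $O(0,0)$, $A(12,9)$, $B(12,12)$, $C(9,12)$, a kite with side lengths $3$ and $15$, part (c). Since these exhaust the members of K1--K4 satisfying the convexity inequality, and Theorem~\ref{Th1} lists all lattice equable kites without redundancy, the corollary follows.

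The step needing the most care is the convexity criterion together with its strictness. I must argue that $M$ strictly interior to $OB$ is both necessary and sufficient for convexity, discarding the boundary cases $M=O$ and $M=B$ as degenerate (non-kite) configurations, and then confirm that the three minimal solutions give genuine non-degenerate convex kites rather than such limiting cases. Once the inequalities are correctly set up, the Pell manipulations are entirely routine.
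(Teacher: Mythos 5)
Your proposal is correct and is essentially the paper's own argument: the paper likewise reduces convexity to the position of $M$ relative to $B$ along the symmetry axis, showing for each family that every non-minimal Pell solution makes $\overrightarrow{MB}$ point backwards (so the kite is a dart), which is exactly the contrapositive of your inequality ``$M$ strictly between $O$ and $B$.'' The Pell manipulations ($5i<n$ forcing $(n,i)=(2,0)$ in K1, $5i<2n$ forcing the excluded $(1,0)$ in K2, and so on) and the final identification of the three kites coincide with the paper's.
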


The proof of this theorem and its corollary are given in the next Section; it replies heavily on the results of \cite{AC}. 
Section~\ref{S:rm} gives more details about the families K1 -- K4 of Theorem~\ref{Th1}.


We now turn to equable trapezoids. As discussed in \cite{UG}, the term `trapezoid' is not universal. For us, a \emph{trapezoid} is a quadrilateral that has exactly one pair of parallel sides. 
An \emph{isosceles trapezoid} is a trapezoid for which the nonparallel sides have equal length. Thus an isosceles trapezoid is the truncation of an 
isosceles triangle by a line parallel to the base.
A \emph{right trapezoid} is a trapezoid having two adjacent right angles.

In \cite{do}, Dolan briefly outlines how equable trapezoids might be classified by reducing the problem to the study of certain triangles; \cite{do} concludes with the words: ``A nice exercise for an interested reader is then to determine precisely which triangles produce equable trapezia in this fashion''. Indeed, this nice exercise gives the following result.

\begin{theorem}\label{T:traps}
Up to Euclidean motions, there are only five equable trapezoids having integer sides and area. They are the two right trapezoids shown in Figure \ref{F:lert}, the two isosceles trapezoids shown in Figure~\ref{F:it510}, and the trapezoid of side lengths 20,4,15,3, shown in Figure~\ref{F:friend}.
\end{theorem}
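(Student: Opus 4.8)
The plan is to set up the equable condition analytically and then reduce the classification to a problem about Heronian triangles. Place the longer parallel side as the base: write $a>b\ge 1$ for the two parallel sides, $c,d\ge 1$ for the legs, and $h$ for the height, all with $a,b,c,d\in\Z$. The area is $\tfrac12(a+b)h$, so equability reads $(a+b)(h-2)=2(c+d)$; in particular $h>2$ and $h=\frac{2(a+b+c+d)}{a+b}$ is rational, whence the area equals the (integer) perimeter automatically. Sliding the top side over, the two legs together with the segment $m:=a-b$ form a triangle $\Delta$ whose height onto the side $m$ is exactly $h$; writing $T:=\operatorname{area}(\Delta)=\tfrac12 mh$ and $A:=a+b$, the equable condition becomes the single relation
\begin{equation*}
A=\frac{2(c+d)}{h-2}=\frac{m(c+d)}{T-m},\qquad\text{equivalently}\qquad AT=m(a+b+c+d).
\end{equation*}

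Next I would observe that this forces $\Delta$ to be Heronian. Since $A,m,c,d$ are integers and $A(T-m)=m(c+d)$, the area $T$ is rational, so $16T^2=(c+d+m)(c+d-m)(m+c-d)(m-c+d)$ is a perfect square. Conversely, a triangle with integer sides $(m,c,d)$ and integer area produces an equable integer trapezoid precisely when $A:=\frac{2(c+d)}{h-2}$ is a positive integer with $A\equiv m\pmod 2$ (so that $a=\tfrac12(A+m)$ and $b=\tfrac12(A-m)$ are integers) and $A\ge m+2$ (so that $b\ge 1$, i.e. we obtain a genuine trapezoid rather than a triangle or a parallelogram). This translation also reads off the three types directly: $\Delta$ isosceles ($c=d$) gives the isosceles trapezoids; $\Delta$ right-angled at an endpoint of the side $m$ (so that one leg equals $h$) gives the right trapezoids; and a scalene, non-right $\Delta$ gives the remaining ``singular'' one.

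The main work, and the step I expect to be the real obstacle, is finiteness: bounding the admissible triangles $(m,c,d)$. The useful hard inequalities are $2<h\le\min(c,d)$, which gives $T>m$, together with $A\ge m+2$, which rearranges to $(m+2)T\le m(c+d+m+2)$. Feeding $T=\tfrac12 mh$ and the Heron identity into $m<T\le\frac{m(c+d+m+2)}{m+2}$ squeezes the shape of $\Delta$: it forces $m$ to be bounded and, for each fixed $m$, either bounds $c+d$ or pushes $\Delta$ towards a near-degenerate (obtuse, foot-outside) configuration in which $c-d$ is pinned to within a single integer of $m$, after which the perfect-square and divisibility requirements leave only finitely many possibilities. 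The delicate point is exactly these thin/obtuse triangles, since the naive height--leg estimates produce only lower bounds on the parameters; controlling them requires the full Heron relation $16T^2=(c+d+m)(c+d-m)(m+c-d)(m-c+d)$ rather than soft convexity bounds.

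Finally I would carry out the resulting finite search. Testing the admissibility conditions on the bounded list of Heronian triangles leaves exactly three: the triangle $(3,4,5)$, used with base $m=3$, $m=4$, $m=5$, yields the two right trapezoids and the scalene trapezoid of side lengths $20,4,15,3$; and the two isosceles Heronian triangles $(6,5,5)$ and $(8,5,5)$ --- each a pair of $3$--$4$--$5$ triangles glued along a leg --- yield the two isosceles trapezoids. Checking that each candidate is equable and simple, and that every other triangle fails either integrality of $A$ or the bound $A\ge m+2$, completes the proof that there are exactly five.
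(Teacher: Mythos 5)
Your reduction is the same one the paper uses: sliding a leg to cut off the triangle $\Delta$ with sides $m=a-b$, $c$, $d$ is exactly the paper's triangle $OA'C$, your relation $A=m(c+d)/(T-m)$ is equivalent to the paper's formula \eqref{E:b} for the short parallel side, and your observation that $T>m$ (i.e.\ $h>2$) is the paper's statement that $\Delta$ is a \emph{perimeter-dominant} Heronian triangle. Your conversion of the problem into ``which Heronian triangles $(m,c,d)$ admit an integer $A\ge m+2$ with $A\equiv m\pmod 2$'' is correct, and your final list of admissible triangles and base choices matches the paper's.

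The genuine gap is the finiteness step, and your proposed mechanism for it is false, not merely unfinished. The soft inequalities $2<h\le\min(c,d)$, $T>m$, $A\ge m+2$ do \emph{not} bound $m$, nor do they reduce the problem to a finite search: there are four infinite Pell families of Heronian triangles (the paper's Table~\ref{F:pdh2}), e.g.\ $(3,\,3x^2-2,\,3x^2-1)$ with $x^2-1=2y^2$, and if you take the \emph{long} side as the base, say $m=3x^2-2$, $c=3$, $d=3x^2-1$, then $T=6xy\approx 4.24x^2>m$ and $A=m(c+d)/(T-m)\approx 7.2x^2\gg m+2$, so every soft test in your sketch is passed for every $x$. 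These infinitely many candidates are eliminated only because $A$ (equivalently the short parallel side $b$) is never an integer, and proving that is the bulk of the paper's proof: for each of the four families and each of the three choices of base side, one needs $\gcd$ computations, parity arguments, and congruences modulo $3$ or $4$ driven by the Pell relation (for instance, in the case above the paper shows $6xy-f$ would have to divide $x-y$, forcing $x\le 1$). Your phrase ``the perfect-square and divisibility requirements leave only finitely many possibilities'' asserts precisely the conclusion that must be proved; the perfect-square (Heron) requirement leaves infinitely many triangles, and the divisibility analysis that kills them is absent from your proposal. Equivalently, what you are missing is either the known classification of perimeter-dominant Heronian triangles (the paper cites \cite{do,BS} for it, including the infinite families) together with the integrality exclusion, or a self-contained replacement for both; without one of these, the ``finite search'' you describe in your last paragraph has no finite list to search.
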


The proof of this theorem is given in the Section \ref{S:prooftraps}.

\begin{rem} The five equable trapezoids of Theorem~\ref{T:traps} are
all LEQs. None of them are obtuse. The trapezoid of side lengths 20,4,15,3 is particularly interesting. First, it has 
two orthogonal non-adjacent edges. Secondly, it cannot be drawn as a LEQ with horizontal parallel sides.
\end{rem}

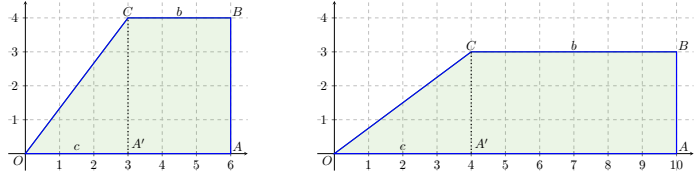
\begin{figure}
\begin{tikzpicture}[scale=.45][line cap=round,line join=round,>=triangle 45,x=1cm,y=1cm]
\begin{axis}[
x=1cm,y=1cm,
axis lines=middle,
grid style=dashed,
ymajorgrids=true,
xmajorgrids=true,
xmin=-.5,
xmax=6.5,
ymin=-.5,
ymax=4.5,
xtick={0,1,...,6},
ytick={0,1,...,5},]
\draw[color=ttzzqq,fill=ttzzqq,fill opacity=0.1] (0,0) -- (6,0) -- (6,4) -- (3,4) -- cycle; 
\draw[line width=1pt,color=qqqqff] (0,0) -- (6,0) -- (6,4) -- (3,4) -- cycle; 
\draw [line width=1pt,dotted,color=black] (3,0)-- (3,4);
\draw [color=black] (-0.2,-0.2) node {$O$};
\draw [color=black] (6.2,.2) node {$A$};
\draw [color=black] (3.3,.3) node {$A'$};
\draw [color=black] (6.2,4.2) node {$B$};
\draw [color=black] (3,4.2) node {$C$};
\draw [color=black] (1.5,0.2) node {$c$};
\draw [color=black] (4.5,4.2) node {$b$};
\end{axis}
\quad 
\end{tikzpicture}
\qquad\begin{tikzpicture}[scale=.45][line cap=round,line join=round,>=triangle 45,x=1cm,y=1cm]
\begin{axis}[
x=1cm,y=1cm,
axis lines=middle,
grid style=dashed,
ymajorgrids=true,
xmajorgrids=true,
xmin=-.5,
xmax=10.5,
ymin=-.5,
ymax=4.5,
xtick={0,1,...,10},
ytick={0,1,...,5},]
\draw[color=ttzzqq,fill=ttzzqq,fill opacity=0.1] (0,0) -- (10,0) -- (10,3) -- (4,3) -- cycle; 
\draw[line width=1pt,color=qqqqff] (0,0) -- (10,0) -- (10,3) -- (4,3) -- cycle; 
\draw [line width=1pt,dotted,color=black] (4,0)-- (4,3);
\draw [color=black] (-0.2,-0.2) node {$O$};
\draw [color=black] (10.2,.2) node {$A$};
\draw [color=black] (4.3,.3) node {$A'$};
\draw [color=black] (10.2,3.2) node {$B$};
\draw [color=black] (4,3.2) node {$C$};
\draw [color=black] (2,0.2) node {$c$};
\draw [color=black] (7,3.2) node {$b$};
\end{axis}
\end{tikzpicture}
\caption{The 6,4,3,5 and 10,3,6,5 lattice equable right trapezoids}\label{F:lert}
\end{figure}

\begin{figure}
\begin{tikzpicture}[scale=.45][line cap=round,line join=round,>=triangle 45,x=1cm,y=1cm]
\begin{axis}[
x=1cm,y=1cm,
axis lines=middle,
grid style=dashed,
ymajorgrids=true,
xmajorgrids=true,
xmin=-.5,
xmax=9,
ymin=-.5,
ymax=5,
xtick={0,1,...,8},
ytick={0,1,...,4},]
\draw[color=ttzzqq,fill=ttzzqq,fill opacity=0.1] (0,0) -- (8,0) -- (5,4) -- (3,4) -- cycle; 
\draw [line width=1pt,color=qqqqff] (5,4)-- (3,4);
\draw [line width=1pt,color=qqqqff] (3,4)-- (0,0);
\draw [line width=1pt,color=qqqqff] (0,0)-- (8,0);
\draw [line width=1pt,color=qqqqff] (8,0)-- (5,4);
\draw [color=black] (-0.2,-0.2) node {$O$};
\draw [color=black] (8.2,.3) node {$A$};
\draw [color=black] (5.2,4.3) node {$B$};
\draw [color=black] (3,4.3) node {$C$};
\draw [color=black] (6.2,.3) node {$A'$};
\draw [line width=1pt,dashed,color=black] (6,0)-- (3,4);
\end{axis}
\end{tikzpicture}
\begin{tikzpicture}[scale=.45][line cap=round,line join=round,>=triangle 45,x=1cm,y=1cm]
\begin{axis}[
x=1cm,y=1cm,
axis lines=middle,
grid style=dashed,
ymajorgrids=true,
xmajorgrids=true,
xmin=-.5,
xmax=15,
ymin=-.5,
ymax=4,
xtick={0,1,...,14},
ytick={0,1,...,3},]
\draw[color=ttzzqq,fill=ttzzqq,fill opacity=0.1] (0,0) -- (14,0) -- (10,3) -- (4,3) -- cycle; 
\draw [line width=1pt,color=qqqqff] (10,3)-- (4,3);
\draw [line width=1pt,color=qqqqff] (4,3)-- (0,0);
\draw [line width=1pt,color=qqqqff] (0,0)-- (14,0);
\draw [line width=1pt,color=qqqqff] (14,0)-- (10,3);
\draw [color=black] (-0.2,-0.2) node {$O$};
\draw [color=black] (14.2,.3) node {$A$};
\draw [color=black] (10.2,3.3) node {$B$};
\draw [color=black] (4,3.3) node {$C$};
\draw [color=black] (8.2,.3) node {$A'$};
\draw [line width=1pt,dashed,color=black] (8,0)-- (4,3);
\end{axis}
\end{tikzpicture}
\caption{The 8,5,2,5 and 14,5,6,5 lattice equable isosceles trapezoids}\label{F:it510}
\end{figure}
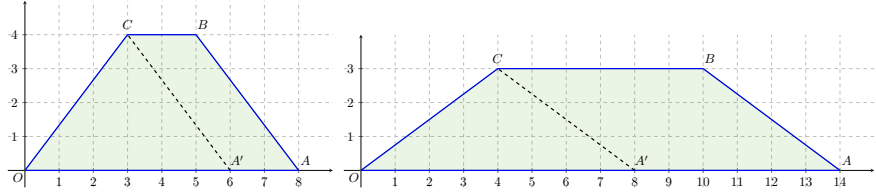

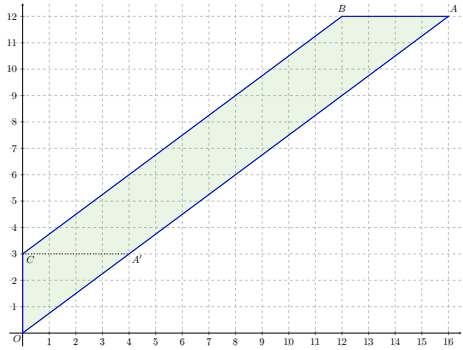
\begin{figure}
\begin{tikzpicture}[scale=.35][line cap=round,line join=round,>=triangle 45,x=1cm,y=1cm]
\begin{axis}[
x=1cm,y=1cm,
axis lines=middle,
grid style=dashed,
ymajorgrids=true,
xmajorgrids=true,
xmin=-.5,
xmax=16.5,
ymin=-.5,
ymax=12.5,
xtick={0,1,...,17},
ytick={0,1,...,13},]
\draw[color=ttzzqq,fill=ttzzqq,fill opacity=0.1] (0,0)--(16,12)--(12,12)--(0,3)-- cycle; 
\draw[line width=1pt,color=qqqqff] (0,0)--(16,12)--(12,12)--(0,3) -- cycle; 
\draw [line width=1pt,dotted,color=black] (0,3)-- (4,3);
\draw [color=black] (-0.2,-0.2) node {$O$};
\draw [color=black] (16.2,12.3) node {$A$};
\draw [color=black] (4.3,2.8) node {$A'$};
\draw [color=black] (12,12.3) node {$B$};
\draw [color=black] (.3,2.8) node {$C$};
\end{axis}
\end{tikzpicture}
\caption{The 20,4,15,3 lattice equable trapezoid}\label{F:friend}
\end{figure}


A \emph{cyclic quadrilateral} is a quadrilateral whose vertices lie on a circle. The classification of cyclic LEQs that we give below in Theorem~\ref{Th4} is not really new.  The determination of the possible integer side lengths for equable cyclic quadrilaterals was established by Nelson \cite{Ne}. It  also appears in the web pages of Balmoral Software \cite{BS} where the problem is shown to amount to a finite number of cases  that is large but can be performed by computer. The proof of Theorem~\ref{Th4} that we give in Section~\ref{S:LECQs} is virtually identical to that of \cite{Ne} until near the end where a simplification allows us to reduce the problem to just 63 cases, that the authors have resolved both by computer and by ``bare-hands''. The LEQ hypothesis is then only used to exclude certain cyclic orders of the sides.

\begin{theorem}  \label{Th4}
Up to Euclidean motions, there are only four  cyclic LEQs; they are the $4\times 4$ square, the $3\times 6$ rectangle, and the two  isosceles trapezoids shown in Figure~\ref{F:it510}.
 \end{theorem}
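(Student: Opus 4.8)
The plan is to reduce the problem to a single symmetric Diophantine equation, solve it to get the admissible side-length multisets, and then use the lattice condition to decide which cyclic orderings of each multiset actually embed in $\Z^2$. Since a cyclic LEQ has integer sides $a,b,c,d$ by \cite[Remark 2]{AC}, Brahmagupta's formula gives area $=\sqrt{(s-a)(s-b)(s-c)(s-d)}$ with $s=\tfrac12(a+b+c+d)$, and equability (area $=$ perimeter $=2s$) becomes $(s-a)(s-b)(s-c)(s-d)=4s^2$. Writing $p=s-a$, $q=s-b$, $r=s-c$, $t=s-d$, which are positive with each less than the sum of the other three, and noting $p+q+r+t=2s$, this is the fully symmetric equation
\[
pqrt=(p+q+r+t)^2 .
\]
First I would show $p,q,r,t\in\Z$: if the perimeter were odd then $p,q,r,t$ would all be half-integers with odd numerators, making the left-hand side (after clearing the factor $16$) odd and the right-hand side even, a contradiction. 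So the perimeter is even and $p,q,r,t$ are positive integers.

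Next I would bound the variables to make the search finite. Ordering $p\le q\le r\le t$, positivity of the sides forces $t<p+q+r$, so $p+q+r+t<2(p+q+r)\le 6r$; combined with $pqrt\ge pq\,r^2$ the equation gives $pq\,r^2\le(p+q+r+t)^2<36r^2$, hence $pq\le 35$ and in particular $p\le 5$. Taking square roots of $pq\,r^2<4(p+q+r)^2$ yields $(\sqrt{pq}-2)\,r<2(p+q)$, which bounds $r$ (and then $t$, recovered as a root of a quadratic) as soon as $pq\ge 5$; the finitely many cases $pq\le 4$ are eliminated directly because the quadratic in $t$ has negative discriminant. This leaves a modest finite computation (the cases alluded to in the text), which I expect to return exactly the four multisets $\{4,4,4,4\}$, $\{3,3,6,6\}$, $\{2,5,5,8\}$ and $\{5,5,6,14\}$, in agreement with Nelson's list \cite{Ne}.

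The decisive step is then the lattice condition, since Brahmagupta's area depends only on the multiset and hence \emph{every} cyclic ordering of each multiset is an equable cyclic quadrilateral. I would exploit that a lattice polygon has all edge vectors in $\Z[i]$ with Gaussian-rational consecutive ratios: for an integer-sided cyclic quadrilateral every interior angle has rational cosine and sine (for instance $\cos=\frac{a^2+b^2-e^2}{2ab}$ and $\sin=\frac{2K}{ab+cd}$, both rational since the squared diagonal $e^2$ and the area $K$ are), so each ratio $v_{i+1}/v_i$ is a determined element of $\Q(i)$ and embeddability becomes a divisibility statement in $\Z[i]$. For the orderings giving the $4\times4$ square, the $3\times6$ rectangle, and the two isosceles trapezoids of Figure~\ref{F:it510} the required Gaussian integers exist (the trapezoid legs are the lattice vectors $(3,4)$ and $(4,3)$). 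The ordering $(3,3,6,6)$ is a cyclic kite of side lengths $3$ and $6$, which is excluded by Corollary~\ref{co}. The two orderings with the equal sides adjacent, $(5,5,2,8)$ and $(5,5,6,14)$, force a Gaussian prime of norm $41$ (respectively $109$) to divide an edge vector of norm $25$, which is impossible; hence these are not lattice. This leaves precisely the four quadrilaterals claimed.

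I expect the Diophantine and finite-search portions to be routine; the main obstacle is the last step, namely proving non-embeddability of the asymmetric cyclic orders. The Gaussian-integer divisibility argument is the cleanest route, but it requires setting up the ratios $v_{i+1}/v_i$ carefully and tracking the relevant prime factorizations in $\Z[i]$; alternatively one can argue directly that no single rotation can simultaneously place all four edge vectors in $\Z^2$.
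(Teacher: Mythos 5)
Your proposal is correct, and its Diophantine core coincides with the paper's (which follows Nelson): the same substitution turns equability plus Brahmagupta into the symmetric equation $wxyz=(w+x+y+z)^2$, the same parity argument makes the variables integers, and the same kind of ordering bounds reduce to a finite search returning the four multisets of \eqref{E:sols}. Your bounds are a constant factor weaker than the paper's ($pq\le 35$ and $r<2(p+q)/(\sqrt{pq}-2)$ versus the paper's $wx\le 16$, $w\le 4$ and $y\le (w+x)/(\sqrt{wx}-2)<85$, which cuts the search to 63 triples), so your computation is larger but equally finite and routine.

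Where you genuinely diverge is the last step, excluding the bad cyclic orders. The paper's trick is elementary: each of $2^2,3^2,4^2,6^2,8^2,14^2$ is not a sum of two nonzero squares, so in any lattice embedding those sides must be axis-parallel; a short case check then forces the square, the rectangle, and the two isosceles trapezoids, and kills the orderings with the equal sides adjacent (two adjacent axis-parallel sides would have to meet at a right angle, which the rigid cyclic shape does not have). Your route instead observes that consecutive edge ratios of a lattice quadrilateral lie in $\Q(i)$ and are pinned down by the rational cosines and sines of the rigid cyclic shape, and then runs a divisibility argument in $\Z[i]$: for $(5,5,2,8)$ the ratio of the two 5-edges is an associate of $(5\pm 4i)^2/41$, forcing the prime $5\pm 4i$ of norm $41$ to divide an edge of norm $25$, and similarly norm $109$ for $(5,5,6,14)$; the kite order $(3,3,6,6)$ you dispatch via Corollary~\ref{co}, which matches the paper's own remark that the $6,3,3,6$ equable kite is cyclic but not a LEQ. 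Both finishes are valid. The paper's is shorter and needs only the sum-of-two-squares fact; yours is heavier to set up (one must fix orientation conventions for the turning angles) but is more robust, since it would still apply to hypothetical side multisets in which every squared side length is a sum of two nonzero squares and the axis-parallel trick says nothing.
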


The paper concludes with a general result. It is shown in \cite{AC} that the diagonals of all lattice equable parallelograms are irrational. This is not the case for all LEQs. For example, for the concave  LEQ with vertices 
\[
O(0,0),A(20,15),B(8,10),C(8,15),
\]
 the external diagonal $AC$ has length 12. Nevertheless, one has the following result, which is proved in Section~\ref{S:irratdiag}.

\begin{theorem}\label{T:irrat}
For a lattice equable quadrilateral $OABC$, the interior diagonals are irrational, except if $OABC$ is congruent to the right trapezoid  of side lengths 6,4,3,5 shown on the left of Figure~\ref{F:lert}. 
\end{theorem}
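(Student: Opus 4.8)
The plan is to prove the contrapositive-style statement: if a lattice equable quadrilateral $OABC$ has a \emph{rational} interior diagonal, then it is congruent to the $6,4,3,5$ right trapezoid. First, note that any diagonal joins two lattice points, so its squared length is a nonnegative integer; hence a rational diagonal automatically has integer length. So suppose, after relabelling, that the interior diagonal $AC$ has integer length $f$. Being an interior diagonal of a simple quadrilateral, $AC$ partitions $OABC$ into the two triangles $OAC$ and $ABC$, whose areas therefore sum to the area of $OABC$. Since the four sides are integers (see \cite{AC}) and $AC=f$ is an integer, both $OAC$ and $ABC$ are lattice triangles all of whose side lengths are integers.

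The key structural step is the elementary lemma that \emph{a lattice triangle with integer side lengths has integer area}. Indeed, twice the area is the absolute value of an integer determinant; were it odd, two edge vectors emanating from a common vertex would be linearly independent mod $2$, hence would span $(\Z/2)^2$, so one of the three sides would have edge vector $\equiv(1,1)\pmod 2$ and thus squared length $\equiv 2\pmod 4$ --- impossible for a perfect square. Consequently $OAC$ and $ABC$ are Heronian, their areas $T_1,T_2$ are positive integers, and the equable hypothesis reduces to the single Diophantine relation
\[
T_1+T_2=OA+AB+BC+CO,
\]
where $T_1$ is the Heron area of the triangle with sides $OA,CO,f$ and $T_2$ that of the triangle with sides $AB,BC,f$.

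It remains to classify the pairs of Heronian triangles sharing a side $f$ that satisfy this relation and assemble into a genuine lattice quadrilateral. Let $h_1,h_2$ be the heights of $O$ and $B$ above the line $AC$, so that $T_1+T_2=\tfrac12 f(h_1+h_2)$. Each height is at most the smaller of its two adjacent sides, and in the convex case the feet of the altitudes lie on the base, giving $OA+CO\le f+2h_1$ and $AB+BC\le f+2h_2$; the concave case is handled similarly. Feeding these into the relation bounds $h_1+h_2$, and the remaining freedom is controlled arithmetically: $h_i$ and $T_i$ are determined by the Pythagorean condition $p^2+q^2=f^2$ on the edge vector $(p,q)$ of $AC$, together with the analogous conditions placing the apexes $O,B$ at lattice points. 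Running through the surviving cases yields exactly one nondegenerate solution, the split $(6,5,5)\cup(3,4,5)$ along $f=5$, which is precisely the $6,4,3,5$ right trapezoid; its second diagonal $OB=\sqrt{52}$ is irrational, as the theorem requires.

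The main obstacle is this final enumeration. The metric inequalities above do \emph{not} by themselves bound $f$ (equable figures can be long and thin, which is exactly why the kite families of Theorem~\ref{Th1} are infinite), so the finiteness must be extracted from the simultaneous Pythagorean and Heronian constraints on the two triangles --- these force the ``thin'' triangle needed to balance a large area defect to satisfy rigid divisor conditions that almost never hold. One must also discard spurious solutions of the area relation that are not quadrilaterals: for instance the pair $(5,5,6)\cup(5,5,8)$ along $f=5$ satisfies $T_1+T_2=OA+AB+BC+CO$ but produces the collinear configuration $O(0,0),A(3,4),B(6,8),C(6,0)$, i.e.\ the equable $6,8,10$ triangle with a vertex inserted on its hypotenuse, and hence is degenerate.
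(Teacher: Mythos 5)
Your reductions are correct and essentially match the paper's own opening moves: a rational diagonal between lattice points has integer length; the interior diagonal splits $OABC$ into two lattice triangles with integer sides; these are Heronian; and equability becomes $T_1+T_2=a+b+c+d$. Your parity proof that a lattice triangle with integer sides has even determinant is a genuinely different (and correct) substitute for the paper's Lemma~\ref{L:heron}, which instead deduces integrality of the area from Heron's formula via a mod $4$ argument for arbitrary integer-sided triangles. Your identification of the degenerate $(5,5,6)\cup(5,5,8)$ configuration (the $6,8,10$ triangle with a vertex on its hypotenuse) is also exactly a case the paper must, and does, discard.

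However, there is a genuine gap, and it sits precisely where the theorem's content lies: the enumeration is asserted, not performed. You write that ``running through the surviving cases yields exactly one nondegenerate solution,'' but you never produce a finite list of cases to run through; indeed you concede that your metric inequalities do not bound $f$, and the ``rigid divisor conditions'' that are supposed to force finiteness are never stated, let alone verified. The paper closes this gap with a specific mechanism you are missing. First, since area equals perimeter, at least one of the two triangles must be \emph{perimeter-dominant}: otherwise $K=K_A+K_C\ge(a+b+p)+(c+d+p)>a+b+c+d=K$. Perimeter-dominant Heronian triangles are completely classified (three sporadic triangles $(3,4,5)$, $(5,5,6)$, $(5,5,8)$ and four infinite Pell families; Table~\ref{F:pdh3}, following Dolan and Balmoral Software). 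Second, the identity $\Delta_C=P_C-K_C=K_A+p-a-b$ of \eqref{E:pd} shows the \emph{other} triangle is then also perimeter-dominant, so both triangles lie in this classified list and share the side $p$, with matching values of perimeter minus area. The paper then eliminates every pairing of rows -- including pairings involving the infinite families, which your sketch cannot even see -- by congruences mod $6$ and incompatibilities between the underlying Pell equations, leaving only the $(3,4,5)$--$(5,5,6)$ pairing along $p=5$, i.e.\ the $6,4,3,5$ right trapezoid of Figure~\ref{F:lert}. Without this perimeter-dominance argument, the appeal to its classification, and the row-by-row eliminations (or some equivalent finiteness-plus-enumeration device), your argument proves only the easy reductions, and the conclusion of Theorem~\ref{T:irrat} does not follow from what you have written. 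A secondary flaw: your inequality $OA+CO\le f+2h_1$ requires the foot of the altitude from $O$ to lie on the segment $AC$, which can fail when a base angle is obtuse even for convex quadrilaterals, so even the preliminary bounds you invoke are not established in general.
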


\begin{notation} In this paper, a quadrilateral $OABC$ is defined by four vertices $O,A,B,C$, no three of which are colinear, such that the line segments joining the vertices have no interior points of intersection; that is, our quadrilaterals have no self-intersections. We will always write the vertices $O,A,B,C$ in positive (counterclockwise) cyclic order. We use the notation  $K(OABC)$ for area and  $P(OABC)$ for perimeter. 
For ease of expression, we often simply write $K$ for $K(OABC)$, and  $P$ for $P(OABC)$, and we abbreviate the triangle areas $K(OAB),K(BCO)$ as $K_A,K_C$ respectively, and the perimeter $P(BCO)$ as $P_C$.
By abuse of notation, we write  $OA,AB,BC,CO$  for both the sides, and their lengths; the meaning should be clear from the context. 
We also usually denote $OA,AB,BC,CO$ by the letters $a,b,c,d$. The lengths of the diagonals $OB,AC$ are denoted $p,q$, respectively. 
We use vector notation, such as  $\overrightarrow{AB}$.  But we use the same symbol, $A$ say, for the vertex $A$ and its position vector $\overrightarrow{OA}$. 
In this paper, we employ the term \emph{positive} in the strict sense. So $\N=\{n\in\Z \ |\ n>0\}$.
\end{notation}

\section{Proof of Theorem \ref{Th1} and Corollary \ref{co}} 

\noindent\textbf{Proof of Theorem \ref{Th1}.}
First note that in each family, $C$ is visibly the reflection of $A$ in the line through $OB$, so $OABC$ is a kite. Secondly, the vertices $A,B,C$ are lattice points. Indeed, this is obvious for K2 and K3, while for K1, it follows from the fact that if $n^2-5i^2=4$, then $n$ and $i$ have the same parity. For K4, if $2n^2-i^2=1$, then $i$ is odd and so $M$ is not a lattice point, but $A,B,C$ are. To see that $OABC$ is equable, we need to show that $K_A$  is the sum of the length $a=OA$ and $b=AB$. Note that $K_A=\frac12 OB\cdot  MA$. Routine calculations give the following values:
\begin{center}
\begin{tabular}{c|c|c|c}
  \hline
   Case &  $K_A$ & $a$ & $b$  \\\hline
  K1& $5n$& $\frac52(n+i)$& $\frac52(n-i)$ \\
  K2 & $10n$& $5(n+2i)$& $5(n-2i)$\\
K3 & $8n$& $4(n+i)$& $4(n-i)$\\
 K4 & $18n$& $3(3n+2i)$& $3(3n-2i)$ \\
\hline
\end{tabular}
\end{center}
From the above table, $K_A=a+b$, so the members of the families K1-K4 are all equable.

Conversely, consider a lattice equable kite with vertices
\[
O(0,0),A(x,y),B(z,w),C(u,v),
\]
that is symmetrical about the diagonal $p=OB$. By translation, and reflection in the $x$-axis,  $y$-axis and the lines $y=\pm x$, if necessary, we may assume without loss of generality that the diagonal $p$ lies in the first quadrant and has gradient $\le 1$. Furthermore,  by reflection in the $x$ and $y$ axes and translating, we may assume that $a\ge b$ and that $O,A,B,C$ are in positive cyclic order.

Notice that $O(0,0),A(x,y),B(z,w),C'(z-x,w-y)$ is a lattice equable parallelogram. Suppose initially that the diagonal $p=OB$ is the shortest of the two diagonals.
From \cite[Lemma 2]{AC}, $p^2=a^2+b^2- 2\sqrt{a^2b^2-4(a+b)^2}$ and thus from \cite[Lemma 6b]{AC}, the diagonal  $AC=q$ satisfies
\[
q^2=\frac{16(a+b)^2}{p^2}=\frac{16(a+b)^2}{a^2+b^2- 2\sqrt{a^2b^2-4(a+b)^2}}.
\]
Furthermore, from \cite[Section~3]{AC}, there  are positive integers $m,n$ with $m\le n$ for which we have the Vieta relations
\begin{equation}\label{E:bc}
ab= k(m^2+n^2),\quad a+b=kmn,\quad \sqrt{a^2b^2 -4(a+b)^2}= k(n^2-m^2),
\end{equation}
where $k$ is either 5, 8 or 9.
It follows that, as in the proof of \cite[Theorem~5]{AC}, $p^2=k^2m^2n^2-4kn^2$. Hence
\begin{equation}\label{E:AC}
q^2=\frac{16(a+b)^2}{a^2+b^2- 2\sqrt{a^2b^2-4(a+b)^2}}=\frac{16k^2m^2n^2}{k^2m^2n^2-4kn^2}=\frac{16km^2}{km^2-4}.
\end{equation}
Since $q^2$ is the square of the distance between two lattice points, it is a positive integer, though $q$ may well be irrational.
Thus $16km^2=q^2(km^2-4)$, so
 $q^2\ge 16$ and 
\[
m^2=\frac{4q^2}{k(q^2-16)}.
\]
As $m\ge 1$, we have $q^2\le \frac{16k}{k-4}$. Thus, as $k\in\{5,8,9\}$, we have three cases:
\begin{enumerate}
\item[(a)]  For $k=5$, we have $m^2=\frac{4q^2}{5(q^2-16)}$ and $q^2\le 80$. Calculations show that the only possibilities are (a1) $m=1,q^2=80$ and (a2) $m=2,q^2=20$.

\item[(b)]  For $k=8$, we have $m^2=\frac{q^2}{2(q^2-16)}$ and $q^2\le 32$. Calculations give just the one possibility $m=1,q^2=32$.

\item[(c)]  For $k=9$, we have $m^2=\frac{4q^2}{9(q^2-16)}$ and $q^2\le \frac{144}5$ so $q^2\le 29$. Calculations give just the one possibility $m=2,q^2=18$.
\end{enumerate}

We now examine these four possibilities in greater detail:

{\bf (a1).} For $k=5,m=1,q^2=80$: one has $a=\frac12(5n+\sqrt{5n^2-20})$ from \eqref{E:bc}. So we require that $5n^2-20$ be a square, say $5n^2-20=j^2$. Note that $j$ must be a multiple of 5, say $j=5i$. So
$n^2-4=5i^2$. Thus $a=\frac12(5n+5i)$.
In this case $b=\frac12(5n-5i)$, again from \eqref{E:bc}
Note that $q^2=80$ is the sum of two squares in only one possible way: $80=8^2+4^2$. So,  as  $O,A,B,C$ are in positive cyclic order, the vector $\overrightarrow{AC}$ is necessarily $(-4,8)$.
From above,
$p^2=k^2m^2n^2-4kn^2=5n^2$.
Since $OB$ is orthogonal to $AC$, we have $B=n(2,1)$. Since $a$ and $b$ are fixed, the triangle $OAB$ is thus uniquely determined. The vertices $A,B,C$ are therefore those of family K1.

{\bf (a2).} For $k=5,m=2,q^2=20$: one has $a=5n+2\sqrt{5n^2-5}$. So we require that $5n^2-5$ be a square, say $5n^2-5=j^2$. Once again, $j$ must be a multiple of 5, say $j=5i$. So
$n^2-1=5i^2$. Thus $a=5n+10i$ and $b=5n-10i$.
Note that $q^2=20$ is the sum of two squares in only one possible way:  $20=4^2+2^2$.
So the vector $\overrightarrow{AC}$ is necessarily $(-2,4)$.
The diagonal $p$ satisfies 
$p^2=k^2m^2n^2-4kn^2=80n^2$.
Since $OB$ is orthogonal to $AC$, we have $B=4n(2,1)$. Since $a$ and $b$ are fixed, the triangle $OAB$ is thus uniquely determined. The vertices $A,B,C$ are therefore those of family K2.

{\bf (b).} For $k=8,m=1$: one has $a=2(2n+\sqrt{2n^2-2})$. So we require that $2n^2-2$ be a square, say $2n^2-2=j^2$. Note that $j$ must be even, say $j=2i$. So
$n^2-1=2i^2$. Thus $a=4(n+i)$ and $b=4(n-i)$.
Note that $q^2=32$ is the sum of two squares in only one possible way:  $32=4^2+4^2$.
So the vector $\overrightarrow{AC}$ is necessarily $(-4,4)$.
The diagonal $p$ satisfies 
$p^2=k^2m^2n^2-4kn^2=32n^2$.
Since $OB$ is orthogonal to $AC$, we have $B=4n(1,1)$. Since $a$ and $b$ are fixed, the triangle $OAB$ is thus uniquely determined. The vertices $A,B,C$ are therefore those of family K3.

{\bf (c).} For $k=9,m=2$: one has $a=3(3n+2\sqrt{2n^2-1})$. So we require that $2n^2-1$ be a square, say $2n^2-1=i^2$. So $a=3(3n+2i)$ and $b=3(3n-2i)$.
Note that $q^2=18$ is the sum of two squares  in only one possible way:  $18=3^2+3^2$.
So the vector $\overrightarrow{AC}$ is necessarily $(-3,3)$.
The diagonal $p$ satisfies 
$p^2=k^2m^2n^2-4kn^2=288n^2$.
Since $OB$ is orthogonal to $AC$, we have $B=12n(1,1)$. Since $a$ and $b$ are fixed, the triangle $OAB$ is thus uniquely determined. The vertices $A,B,C$ are therefore those of family K4 with $n\ge 2$. Notice that there is a member of K4 with $n=1$ which is not included here as $n\ge m=2$. We will find the missing family member below.

\smallskip
The above calculations assumed that $p$ is the shortest diagonal of the parallelogram $OABC'$. Similarly, if $p$ is the longest diagonal, then $n^2=\frac{4q^2}{k(q^2-16)}$, and essentially the same calculations as before  gives the possibilities:
\begin{enumerate}
\item[(i)]  $k=5$; $n=1,q^2=80$ or $n=2,q^2=20$.

\item[(ii)]  $k=8$; $n=1,q^2=32$.

\item[(iii)]   $k=9$; $n=2,q^2=18$.
\end{enumerate}
As $m\le n$, this gives six potential cases:
\begin{enumerate}
\item[(i1)] $k=5$; $m=n=1$. But then \eqref{E:bc}  has no integer solutions for $a,b$.
\item[(i2)] $k=5$; $m=1,n=2$. Then $a=5,b=5$. 
\item[(i3)] $k=5$; $m=n=2$. But then \eqref{E:bc} has no integer solutions for $a,b$.

\item[(ii)] $k=8$; $m=n=1$. Then $a=4,b=4$. 

\item[(iii1)]  $k=9$; $m=1,n=2$. Then $a=15,b=3$.  
\item[(iii2)]  $k=9$; $m=n=2$. But then \eqref{E:bc} has no integer solutions for $a,b$. 
\end{enumerate}
So it remains to consider the  (i2), (ii), (iii1). In Case (i2), $k=5,m=1,n=2, a=b=5$ and long diagonal $p$ given by $p^2=k^2m^2n^2-4km^2=80$. 
But $80$ is the sum of two squares  in only one possible way:  $80=4^2+8^2$.
So $B$ is necessarily $(8,4)$. Then $A$ is forced to be $(5,0)$ and $C$ is $(3,4)$, as shown on the left of Figure~\ref{F:rhom}. The rhombus of side length 5  also appeared as member $n=2,i=0$ of K1; this is shown on the right of Figure~\ref{F:rhom}.  Obviously, in Figure~\ref{F:rhom}, the figure on the left  can be moved onto the figure on the right by a Euclidean motion.

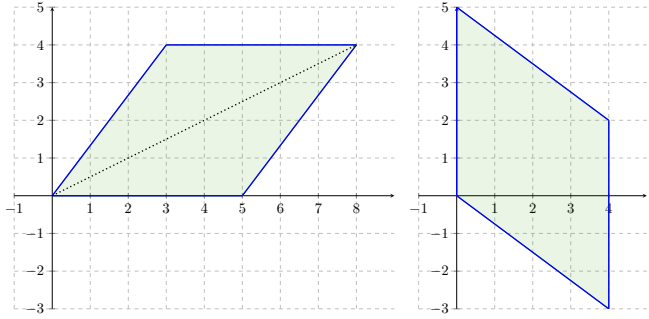
\begin{figure}
\begin{tikzpicture}[scale=.5][line cap=round,line join=round,>=triangle 45,x=1cm,y=1cm]
\begin{axis}[
x=1cm,y=1cm,
axis lines=middle,
grid style=dashed,
ymajorgrids=true,
xmajorgrids=true,
xmin=-1,
xmax=9,
ymin=-3,
ymax=5,
xtick={-1,0,...,8},
ytick={-3,-2,...,5},]
\draw[color=ttzzqq,fill=ttzzqq,fill opacity=0.1] (0,0) -- (5,0) -- (8,4) -- (3,4) -- cycle; 
\draw [line width=1pt,color=qqqqff] (8,4)-- (3,4);
\draw [line width=1pt,color=qqqqff] (3,4)-- (0,0);
\draw [line width=1pt,color=qqqqff] (0,0)-- (5,0);
\draw [line width=1pt,color=qqqqff] (5,0)-- (8,4);
\draw [line width=1pt,dotted,color=black] (0,0)-- (8,4);
\end{axis}
\quad 
\end{tikzpicture}
\begin{tikzpicture}[scale=.5][line cap=round,line join=round,>=triangle 45,x=1cm,y=1cm]
\begin{axis}[
x=1cm,y=1cm,
axis lines=middle,
grid style=dashed,
ymajorgrids=true,
xmajorgrids=true,
xmin=-1,
xmax=5,
ymin=-3,
ymax=5,
xtick={-1,0,...,4},
ytick={-3,-2,...,5},]
\draw[color=ttzzqq,fill=ttzzqq,fill opacity=0.1] (0,0) -- (4,-3) -- (4,2) -- (0,5) -- cycle; 
\draw [line width=1pt,color=qqqqff] (0,0)-- (4,-3);
\draw [line width=1pt,color=qqqqff] (4,2)-- (4,-3);
\draw [line width=1pt,color=qqqqff] (0,0)-- (0,5);
\draw [line width=1pt,color=qqqqff] (0,5)-- (4,2);
\end{axis}
\end{tikzpicture}
\caption{Equable Rhombi}\label{F:rhom}
\end{figure}

Case (ii) is the $4\times 4$ square, so its short and long diagonals are equal: this appeared as member $n=1,i=0$ of K3.

The kite of Case (iii1) has $k=9,m=1,n=2, a=15,b=3$ and long diagonal $p$ given by $p^2=k^2m^2n^2-4km^2=288$. 
But $288$ is the sum of two squares  in only one possible way:  $288= 12^2+12^2$.
So $B$ is necessarily $(12,12)$. Then $A$ is forced to be $(12,9)$ and $C$ is $(9,12)$, as shown in Figure~\ref{F:315}. This kite is the first member of the family K4, with $n=1,i=1$. We have thus shown that the table in the statement of the theorem is complete.

It remains to see that there is no redundancy in the table of the theorem. Within each family there is certainly no redundancy, as one can see from the formula for $B$ in the theorem. Note that from \cite{AC}, for a lattice equable kite  with vertices $O(0,0),A,B,C$ and $a=OA$ and $b=AB$, one has $\gcd(a,b)=3,4,5$ according to whether $k=9,8,5$ respectively, where $k$ is as defined above. Recall that the members of K1 -- K4 have $k=5,5,8,9$ respectively. It follows that if a kite is congruent to both a member of K$x$ and a member of K$y$, for $x<y$, then the only possibility is that $x=1,y=2$. Suppose we have a kite $Q_1$ in K1, with $n_1^2-5i_1^2=4$, that is congruent to a kite $Q_2$  in K2, with $n_2^2-5i_2^2=1$. From the table at the beginning of the proof of the theorem, we have from the formulas for the areas that $n_1=2n_2$. It follows that, as $n_1^2-5i_1^2=4$ and $n_2^2-5i_2^2=1$, we have $i_1=2i_2$. But then for $Q_1$, we have $b=\frac52(n_1+i_1)=5(n_2+i_2)$, while for $Q_2$, we have $b=5(n_2+2i_2)$. But then $i_2=0$ and hence $n_2=1$, which is excluded in the statement of the theorem.
This completes the proof of Theorem~\ref{Th1}.\hfill\qed

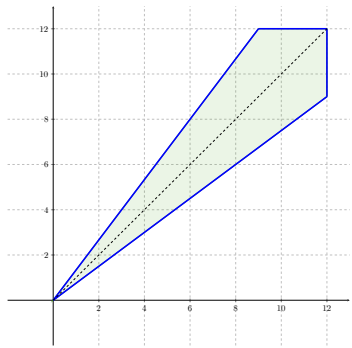
\begin{figure}
\begin{tikzpicture}[scale=.3][line cap=round,line join=round,>=triangle 45,x=1cm,y=1cm]
\begin{axis}[
x=1cm,y=1cm,
axis lines=middle,
grid style=dashed,
ymajorgrids=true,
xmajorgrids=true,
xmin=-2,
xmax=13,
ymin=-2,
ymax=13,
xtick={0,2,...,12},
ytick={0,2,...,12},]
\draw[color=ttzzqq,fill=ttzzqq,fill opacity=0.1] (0,0) -- (12,9) -- (12,12) -- (9,12) -- cycle; 
\draw [line width=2pt,color=qqqqff] (12,12)-- (12,9);
\draw [line width=2pt,color=qqqqff] (12,9)-- (0,0);
\draw [line width=2pt,color=qqqqff] (0,0)-- (9,12);
\draw [line width=2pt,color=qqqqff] (9,12)-- (12,12);
\draw [dashed,line width=.5pt] (0,0)-- (12,12);
\end{axis}
\end{tikzpicture}
\caption{Lattice equable kite with side lengths 3 and 15}\label{F:315}
\end{figure}

\bigskip
\noindent\textbf{Proof of Corollary \ref{co}.}
The convex kites itemized in the corollary are the first members of the families K1, K3 and K4 respectively. We claim all the other kites exhibited in Theorem 1 are darts. To see this, it suffices to note that in all these cases, 
the vector $\overrightarrow{MB}$  lies in the third quadrant. 
For the family K1, when $n>2$, one has $i>0$ and so
 $n^2=5i^2+4<25i^2$, and hence $n-5i<0$. Hence, from the formulas for $B,M$ in the theorem,  $\overrightarrow{MB}=\frac12(n-5i)(2,1)$ is in the third quadrant.

Similarly, for the other three cases, it is enough to note that:
\begin{enumerate}

\item For the family K2, when $n>1$, one has $i>0$ and so
$4n^2=20i^2+4<25i^2$ and hence  $2n-5i<0$.

\item For the family K3, when $n>1$, one has $i>0$ and so
$n^2=2i^2+1<4i^2$ and hence  $n-2i<0$.

\item For the family K4, when $n>1$, one has $i>2$ and so
 $16n^2=8i^2+8<9i^2$ and hence  $4n-3i<0$. 
\end{enumerate}
\hfill\qed


\section{Discussion of  the families K1 -- K4}\label{S:rm}

We now describe the families K1 -- K4 in greater detail. Each case is determined by a very common Pell or Pell-like equation.

For K1, the defining condition is the Pell-like equation $n^2-5i^2=4$.  
It is well known that its solutions, when denoted $(n_{j},i_{j})$, satisfy the following recurrence relation:
\[
(n_{j},i_{j})= 3 (n_{j-1},i_{j-1})-(n_{j-2},i_{j-2}).
\]
 The first few values of $n$ are: 2,3,7,18,47,123,... This is OEIS entry A005248 \cite{OEIS}. 
Here are the first 4 solutions:
\begin{center}
\begin{tabular}{c|c|c|c}
  \hline
   $n$ &  $i$ & $A$ & $B$  \\\hline
 2 &0 &(4,-3) &(4,2)\\
3 &1 &(10,0) &(6,3)\\
7&3 &(24,7) &(14,7)\\
18 &8 &(60,25) &(36,18)\\
\hline
\end{tabular}
\end{center}
The kites with $n=3,7,18$ are shown in Figure \ref{F:K1}.

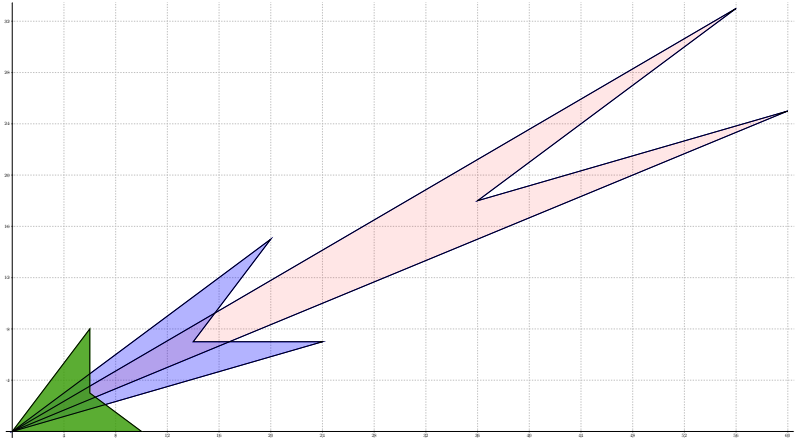
\begin{figure}
\begin{tikzpicture}[scale=.17][line cap=round,line join=round,>=triangle 45,x=1cm,y=1cm]
\begin{axis}[
x=1cm,y=1cm,
axis lines=middle,
grid style=dashed,
ymajorgrids=true,
xmajorgrids=true,
xmin=-0.5,
xmax=60.5,
ymin=-0.5,
ymax=33.5,
xtick={0,4,...,60},
ytick={0,4,...,32},]
\draw[color=qqqqff,fill=red,fill opacity=0.1] (0,0) -- (56,33) -- (36,18) -- (60,25) -- cycle; 
\draw[color=qqqqff,fill=qqqqff,fill opacity=0.3] (0,0) -- (20,15) -- (14,7) -- (24,7) -- cycle; 
\draw[color=ttzzqq,fill=ttzzqq,fill opacity=.8] (0,0) -- (6,8) -- (6,3) -- (10,0) -- cycle; 
\draw[line width=2pt] (0,0) -- (56,33) -- (36,18) -- (60,25) -- cycle; 
\draw[line width=2pt] (0,0) -- (20,15) -- (14,7) -- (24,7)-- cycle; 
\draw[line width=2pt] (0,0) -- (6,8) -- (6,3) -- (10,0)-- cycle; 


\end{axis}
\end{tikzpicture}
\caption{K1 lattice equable kites with $n=3,7,18$}\label{F:K1}
\end{figure}

For K2, the defining condition is the Pell equation $n^2-5i^2=1$.
Its solutions, when denoted $(n_{j},i_{j})$, satisfy the following recurrence relation:
\[
(n_{j},i_{j})= 18 (n_{j-1},i_{j-1})-(n_{j-2},i_{j-2}).
\]
The first few values of $n$ are:  1, 9, 161, 2889, 51841,... This is OEIS entry  A023039  \cite{OEIS}.  
Here are the first 3 cases:
\begin{center}
\begin{tabular}{c|c|c|c}
  \hline
   $n$ &  $i$ & $A$ & $B$  \\\hline
9 &4&(77, 36)&(72,36)\\
161 &72&(1365, 680)&(1288,644)\\
2889 &1292&(24477, 12236)&(23112,11556)\\
\hline
\end{tabular}
\end{center}

For K3, the defining condition is the Pell equation $n^2-2i^2=1$.
Its solutions, when denoted $(n_{j},i_{j})$, satisfy the following recurrence relation:
\[
(n_{j},i_{j})= 6 (n_{j-1},i_{j-1})-(n_{j-2},i_{j-2}).
\]
The first few values of $n$ are:  1, 3, 17, 99, 577,... This is OEIS entry A001541   \cite{OEIS}.   
Here are the first 3 cases:
\begin{center}
\begin{tabular}{c|c|c|c}
  \hline
   $n$ &  $i$ & $A$ & $B$  \\\hline
1 &0&(4,0)&(4,4)\\
3 &2&(16, 12)&(12,12)\\
17 &12&(84, 80)&(68,68)\\
\hline
\end{tabular}
\end{center}

For K4, the defining condition is the Pell-like equation $2n^2-i^2=1$.
Its solutions, when denoted $(n_{j},i_{j})$, satisfy the following recurrence relation:
\[
(n_{j},i_{j})= 6 (n_{j-1},i_{j-1})-(n_{j-2},i_{j-2}).
\]
The first few values of $n$ are: 1, 5, 29, 169, 985,... This is OEIS entry A001653 \cite{OEIS}.    
Here are the first 3 cases:
\begin{center}
\begin{tabular}{c|c|c|c}
  \hline
   $n$ &  $i$ & $A$ & $B$  \\\hline
1 &1&(12, 9)&(12,12)\\
5 &7&(63, 60)&(60,60)\\
29 &41&(360, 357)&(348,348)\\
\hline
\end{tabular}
\end{center}


\section{Proof of Theorem \ref{T:traps}} \label{S:prooftraps}

We follow a strategy suggested in \cite{do}. Consider a trapezoid  with vertices $OABC$ in counterclockwise order, such that $OA$ is the longest side, having an acute angle at $0$; see Figure~\ref{F:traps}. We will not assume that $OABC$ is a LEQ, but we do assume that it is equable with sides of integer length.

\begin{figure}[H]
\begin{tikzpicture}[scale=.4][line cap=round,line join=round,>=triangle 45,x=1cm,y=1cm]
\draw[color=ttzzqq,fill=ttzzqq,fill opacity=0.1] (0,0) -- (12,6) -- (12,8) -- (2,3) -- cycle; 
\draw [line width=1pt,color=qqqqff] (0,0)-- (12,6);
\draw [line width=1pt,color=qqqqff] (12,8)-- (12,6);
\draw [line width=1pt,color=qqqqff] (12,8)-- (2,3);
\draw [line width=1pt,color=qqqqff] (0,0)-- (2,3);
\draw [color=black] (-0.3,-0.3) node {$O$};
\draw [color=black] (12,5.7) node {$A$};
\draw [color=black] (2,.6) node {$A'$};
\draw [color=black] (12.3,8.4) node {$B$};
\draw [color=black] (1.7,3.3) node {$C$};
\draw [line width=1pt,dashed,color=black] (2,3)-- (2,1);
\end{tikzpicture}
\hskip1cm
\begin{tikzpicture}[scale=.4][line cap=round,line join=round,>=triangle 45,x=1cm,y=1cm]
\draw[color=ttzzqq,fill=ttzzqq,fill opacity=0.1] (0,0) -- (12,6) -- (8,6) -- (2,3) -- cycle; 
\draw [line width=1pt,color=qqqqff] (0,0)-- (12,6);
\draw [line width=1pt,color=qqqqff] (8,6)-- (12,6);
\draw [line width=1pt,color=qqqqff] (8,6)-- (2,3);
\draw [line width=1pt,color=qqqqff] (0,0)-- (2,3);
\draw [color=black] (-0.3,-0.3) node {$O$};
\draw [color=black] (12,5.7) node {$A$};
\draw [color=black] (6.3,2.6) node {$A'$};
\draw [color=black] (8.3,6.5) node {$B$};
\draw [color=black] (1.7,3.3) node {$C$};
\draw [line width=1pt,dashed,color=black] (2,3)-- (6,3);
\end{tikzpicture}
\caption{Trapezoids; obtuse and acute}\label{F:traps}
\end{figure}
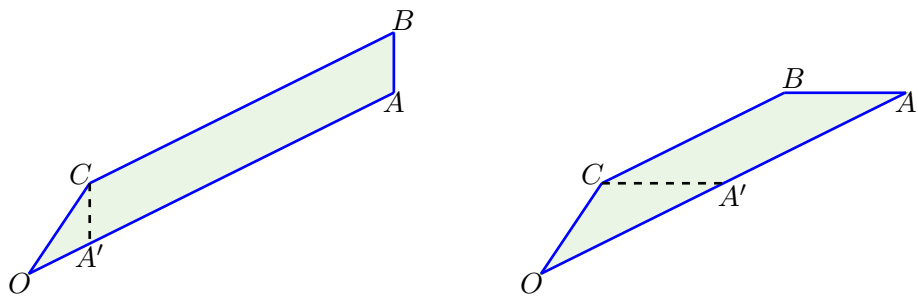

Let $A'$ be the point of the side $OA$ such that $A'C$ is parallel to $AB$. Let $a,c$ denote the lengths of sides $OA,BC$ respectively, and let $h$ be the distance between the parallel side $OA,BC$.  Note that 
$ K=h(a+c)/2$, so $h$ is rational. The triangle $OA'C$ has integer sides. Its area is $h(a-c)/2$, so it is rational.  Hence its area is an integer (so the triangle is Heronian) by the following well known lemma, whose proof we include for completeness (see \cite{Bl}).

\begin{lemma}\label{L:heron}
If a triangle has integer sides and rational area, then its area is an integer.
\end{lemma}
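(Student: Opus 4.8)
The plan is to invoke Heron's formula in factored form and then analyze the powers of $2$ dividing the factors. Write the integer sides as $a,b,c$ and suppose the area $K\in\mathbb{Q}$. Heron's formula gives
\[
16K^2=(a+b+c)(-a+b+c)(a-b+c)(a+b-c)=:N,
\]
so $N\in\Z$. Since $K$ is rational, so is $4K$, and $(4K)^2=N$ is an integer; a rational number whose square is an integer is itself an integer (apply the rational root theorem to $x^2-N$), so $4K\in\Z$, and in particular $N$ is a perfect square. Everything then hinges on the parity of the four factors.

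First I would note that the four factors are pairwise congruent modulo $2$: subtracting each of the last three from $a+b+c$ gives $2a,2b,2c$. Hence the four factors are either all even or all odd. In the all-even case, writing each factor as twice an integer yields $N=16t$ for some $t\in\Z$, whence $K^2=t\in\Z$; as $K\in\mathbb{Q}$ with integer square, $K\in\Z$ and we are done.

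The remaining case, all four factors odd, is the crux, and here I would derive a contradiction by showing $N$ cannot be a perfect square. Set $u=-a+b+c$, $v=a-b+c$, $w=a+b-c$, so that $a+b+c=u+v+w$ and $N=uvw\,(u+v+w)$ with $u,v,w$ all odd. Reducing modulo $4$, each of $u,v,w$ is $\equiv\pm1$, and running through the (few) sign patterns shows $uvw\,(u+v+w)\equiv 3\pmod 4$ in every case. Since a perfect square is $\equiv0$ or $1\pmod4$, this contradicts the fact that $N$ is a perfect square, so the all-odd case cannot occur.

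I expect the main obstacle to be nothing more than this last congruence: once $N\equiv3\pmod4$ is established for three odd summands, the lemma falls out, the rest being the parity split and the elementary observation that a rational square root of an integer is an integer.
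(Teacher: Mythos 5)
Your proof is correct and takes essentially the same route as the paper's: Heron's formula, the observation that $4K$ is an integer and that the four factors share a common parity, followed by a mod-4 argument that rules out the all-odd (odd-perimeter) case. The only cosmetic difference is that the paper obtains the contradiction from the single congruence $S(S-2x)(S-2y)(S-2z)\equiv -S^4 \pmod 4$, whereas you check the finitely many residue patterns $\pm 1 \pmod 4$ by hand.
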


\begin{proof} Consider a triangle with integer sides $x,y,z$ and rational area $T$. By Heron's formula, 
\[
4T=\sqrt{(x+y+z)(-x+y+z)(x-y+z)(x+y-z)}.
\]
Since $(x+y+z)(-x+y+z)(x-y+z)(x+y-z)$ is an integer, and its square root, $4T$, is rational, $4T$ is necessarily an integer. 
Notice that the terms $(x+y+z),(-x+y+z),(x-y+z),(x+y-z)$ have the same parity. So to see that $T$ is an integer, it remains to show that the perimeter $S:=x+y+z$ is even. We have
\[
(4T)^2=S(S-2x)(S-2y)(S-2z)\equiv S^4-2S^3(x+y+z)\equiv -S^4 \pmod4.
\]
But this is possible only if $4T$ and $S$ are even.\hfill\qed
\end{proof}

\smallskip
Since $OABC$ is equable its area is greater than the sum of the lengths of the parallel sides; that is, $h(a+c)/2>a+c$. So $h>2$. 
Now 
the area $ K$ is  $K(OA'C)+hc$ and the perimeter $P$ is $P(OA'C)+2c$.
Hence, as $OABC$ is equable,  
\[
 P(OA'C) = K(OA'C)+(h-2)c  >  K(OA'C).
\]
In the language of \cite{BS}, $OA'C$ is a \emph{perimeter-dominant} Heronian triangle.  
Heronian triangles with this property have been classified \cite{do,BS}.  The complete list is given in Tables \ref{F:pdh1} and \ref{F:pdh2}. We will see that the  triangles of Table~\ref{F:pdh1} arise as the triangle $OA'C$ of an equable trapezoid $OABC$, while the  triangles of Table \ref{F:pdh2} do not.
For an equable trapezoid $OABC$, as above,  
let $f$ denote the length $OA'$. Then from above
$ P(OA'C) = K(OA'C)+(h-2)c=  K(OA'C)+(\frac{2  K(OA'C)}f-2)c $. Rearranging, we have
\begin{equation}\label{E:b}
c=\frac{f\, (Perimeter-Area)}{2 (Area-f)}.
\end{equation}
The constraint that eliminates triangles is the requirement that $c$ is an integer. Table \ref{F:pdh1} gives three special perimeter-dominant Heronian triangles. The table gives the side lengths, perimeter and area, and in the 4th column, we have the values of $c$, given by \eqref{E:b}, corresponding respectively to the three values of $f$ given in the first column.

\begin{table}[h]
\begin{tabular}{c|c|c|c}
  \hline
   Side Lengths $f$&  Perimeter&Area &$c$ \\\hline
(3,4,5) &  12&6& (3,6,15) \\
(5,5,6) & 16&12& $(\frac{10}7,\frac{10}7,2)$ \\
(5,5,8) & 18&12& $(\frac{15}7,\frac{15}7,6) $\\
\hline
\end{tabular}
\smallskip
\caption{Three special perimeter-dominant Heronian triangles}\label{F:pdh1}
\end{table}

{\bf For the \pmb{$3,4,5$} triangle}, given in the first row of Table \ref{F:pdh1}, each choice of side for $OA'$ results in an equable trapezoid. 

\begin{itemize}[leftmargin=*]
\item If $f=3$, \eqref{E:b} gives $c=3$, so $OABC$ has parallel sides of length $c=3$ and $a=c+f=6$. The other sides have length 4 and 5. This trapezoid is the lattice equable right trapezoid shown on the left of Figure \ref{F:lert}.
\item If $f=4$, we obtain $c=6$, so $OABC$ has parallel sides of length $c=6$ and $a=c+f=10$. The other sides have length 3 and 5. This trapezoid is the lattice equable right trapezoid shown on the right of Figure \ref{F:lert}.
\item If $f=5$, we obtain $c=15$, so $OABC$ has parallel sides of length $c=15$ and $a=c+f=20$. The other sides have length 3 and 4. This trapezoid is the lattice equable trapezoid shown in Figure \ref{F:friend}.
\end{itemize}

{\bf For the \pmb{$5,5,6$} triangle}, only the choice of $f=6$ produces an integer value of $c$. Here \eqref{E:b} gives $c=2$, so $OABC$ has parallel sides of length $c=2$ and $a=c+f=8$. The other sides each have length 5. This trapezoid is the lattice equable isosceles trapezoid shown on the left of Figure~\ref{F:it510}.

Similarly, for the \pmb{$5,5,8$} {\bf triangle}, only the choice of $f=8$ produces an integer value of $c$. Here \eqref{E:b} gives $c=6$, so $OABC$ has parallel sides of length $c=6$ and $a=c+f=14$. The other sides each have length 5. This trapezoid is the lattice equable isosceles trapezoid shown on the right of Figure \ref{F:it510}.

\begin{table}[h]
\begin{tabular}{c|c|c|c}
  \hline
   Side Lengths  $f$ & Restrictions& Perimeter&Area  \\\hline
$(3,3x^2+1,3x^2+2)$ & $x\ge 7,\ x^2+1=2y^2$& $6x^2+6$& $6xy$ \\
$(3,3x^2-2,3x^2-1)$  & $x\ge 3,\ x^2-1=2y^2$& $6x^2$& $6xy$  \\
$(4,2x^2+1,2x^2+3)$   & $x\ge 5,\ x^2+2=3y^2$& $4x^2+8$& $6xy$ \\
$(4,4x^2-3,4x^2-1)$    & $x\ge 2,\ x^2-1=3y^2$& $8x^2$& $12xy$  \\
\hline
\end{tabular}
\smallskip
\caption{Infinite families of perimeter-dominant Heronian triangles}\label{F:pdh2}
\end{table}

It remains for us to show that none of the perimeter-dominant Heronian triangles of Table \ref{F:pdh2} give rise to an equable trapezoid.
In Table \ref{F:pdh2}, the variable $x$ is a positive integer subject to a certain Pell, or Pell-like, equation, as given in the Restrictions column of the table.
We examine in turn the four cases given by the four rows of Table \ref{F:pdh2}, and in each case we consider the three given possible values of the side lengths $f$.
In each case, we assume that the value $c$ given by \eqref{E:b} is an integer, and we derive a contradiction.
We will on occasion employ the following elementary result.

\begin{lemma}\label{L:bd}
Consider positive real constants $\alpha,\beta,\gamma,\delta$ and  the real function $g$ defined here below on the open interval $(\delta/\gamma,+\infty)$. Then 
\[
g(r)=\frac{\alpha  r+\beta}{\gamma r-\delta}<1,\qquad \text{ for } \quad  r> \frac{ \beta+\delta}{\gamma-\alpha }.
\]
\end{lemma}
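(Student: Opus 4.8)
The plan is to reduce the stated strict inequality to an elementary linear one by clearing the denominator, exploiting the fact that the denominator is strictly positive throughout the domain. First I would observe that on the interval $(\delta/\gamma,+\infty)$ the quantity $\gamma r-\delta$ is positive, since $r>\delta/\gamma$ and $\gamma>0$. Hence multiplying the target inequality $g(r)<1$ through by $\gamma r-\delta$ preserves its direction, and $g(r)<1$ is equivalent to
\[
\alpha r+\beta<\gamma r-\delta,
\]
that is, to $(\gamma-\alpha)\,r>\beta+\delta$.

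Next I would make explicit the hypothesis that must be in force for the conclusion even to be meaningful, namely $\gamma>\alpha$: only then is the threshold $\frac{\beta+\delta}{\gamma-\alpha}$ a positive real number. Granting $\gamma-\alpha>0$, I can divide the inequality $(\gamma-\alpha)\,r>\beta+\delta$ by the positive quantity $\gamma-\alpha$ without reversing it, obtaining precisely $r>\frac{\beta+\delta}{\gamma-\alpha}$. Reading this chain of equivalences backwards yields the claim: whenever $r$ exceeds the threshold, $g(r)<1$. For completeness I would also check that the threshold lies inside the domain of $g$, so the hypothesis on $r$ is not vacuous; a one-line cross-multiplication shows $\frac{\beta+\delta}{\gamma-\alpha}>\frac{\delta}{\gamma}$ is equivalent to $\gamma\beta+\alpha\delta>0$, which holds because all four constants are positive.

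As for the main obstacle: there is essentially none beyond bookkeeping. The only points requiring care are the sign of the denominator (to guarantee the direction of the inequality is preserved when it is cleared) and the tacit assumption $\gamma>\alpha$ (to guarantee the division in the final step does not flip the inequality and that the stated bound is a genuine positive real). Once these sign conditions are recorded, the result is a direct algebraic rearrangement.
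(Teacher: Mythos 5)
Your proof is correct. The paper itself gives no proof of this lemma (it is introduced only as ``the following elementary result''), and your argument --- clearing the strictly positive denominator $\gamma r-\delta$ and rearranging to $(\gamma-\alpha)r>\beta+\delta$ --- is plainly the intended one. Your one genuine addition is making explicit the tacit hypothesis $\gamma>\alpha$, which the statement of the lemma omits but without which the conclusion fails (for $\gamma\le\alpha$ one has $g(r)\to\alpha/\gamma\ge 1$ as $r\to\infty$); this is a real, if minor, gap in the paper's formulation rather than in your proof, and one can check that in every application of the lemma in the paper (e.g.\ $\alpha=3-\tfrac{3}{\sqrt2}$, $\gamma=\sqrt2$ in the first row of Table~\ref{F:pdh2}) the condition $\gamma>\alpha$ does hold.
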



\smallskip
{ \bf 1st Row}:
Here $x\ge 7$ and $x^2+1=2y^2$. In particular, $x$ is odd.
The perimeter is 
$6x^2+6$, and the area is $6xy$. So 
by \eqref{E:b},
\[
c=\frac{f(6x^2+6-6xy)}{2(6xy-f)}=\frac{f(3x^2+3-3xy)}{6xy-f}.
\]
\begin{itemize}[leftmargin=*] 
\item If  $f=3$, we have  $c=\frac{3x^2+3-3xy}{2xy-1}$. Using $y>\frac{x}{\sqrt2}$, Lemma \ref{L:bd} gives
\[
c=\frac{3x^2+3-3xy}{2xy-1}<\frac{(3-\frac{3}{\sqrt2})x^2+3}{\sqrt2x^2-1}<1,
\]
for $x^2> \frac{3+1}{\sqrt2-(3-\frac{3}{\sqrt2})}=\frac{4\sqrt2}{5-3\sqrt2}\cong 7.47$. Thus $c<1$ for $x\ge 3$, contradicting the assumption that $c$ is a positive integer.

\item If $f=3x^2+1$, the value $f$ is even since $x$ is odd. Furthermore,
$f$ is  not divisible by $3$ or any divisor of $x$. Let $z$ be an odd prime common divisor of $f$ and $6xy-f$. So $z$ divides $6xy$ and hence $z$ divides $y$. But $x^2+1=2y^2$, so $z$ divides  $x^2+1$ and $f=3x^2+1$, which is impossible as $z$ does not divide  $x$. Hence $\gcd(f,6xy-f)=2$. Thus, as $c$ is an integer, $3xy-f/2$ divides $3x^2+3-3xy$, say
$2(3x^2+3-3xy)=k(6xy-f)$,
for some positive integer $k$. Using $y>\frac{x}{\sqrt2}$, Lemma \ref{L:bd} gives
\[
k=\frac{2(3x^2+3-3xy)}{6xy-3x^2-1}<\frac{2((3-\frac{3}{\sqrt2})x^2+3)}{(\frac{6}{\sqrt2}-3)x^2-1}<2 ,
\]
for $x^2> \frac{3+1}{(\frac{6}{\sqrt2}-3)-(3-\frac{3}{\sqrt2})}= \frac{4\sqrt2}{9-6\sqrt2 }\cong 11.0$. Thus $k<2$ for  
$x\ge 4$. So the only possibility is $k=1$, but then 
\[
2(3x^2+3-3xy)=6xy-3x^2-1\,\Leftrightarrow\, 9x^2-12xy+7=0,
\]
 which is impossible mod 3.

\item If $f=3x^2+2$, the value $f$ is odd since $x$ is odd. Furthermore, $f$ is  not divisible by $3$ or any divisor of $x$. Let $z$ be a prime common divisor of $f$ and $6xy-f$.  So $z$ divides $y$. But $x^2+1=2y^2$, so $z$ divides  $2x^2+2$ and $f=3x^2+2$, which  is again impossible as $z$ does not divide  $x$. Hence $\gcd(f,6xy-f)=1$. Thus, as $c$ is an integer, $6xy-f$ divides $3x^2+3-3xy$, say
$3x^2+3-3xy=k(6xy-f)$,
for some positive integer $k$. Using $y>\frac{x}{\sqrt2}$, Lemma \ref{L:bd} gives
\[
k=\frac{3x^2+3-3xy}{6xy-3x^2-2}<\frac{(3-\frac{3}{\sqrt2})x^2+3}{(\frac{6}{\sqrt2}-3)x^2-2}<1,
\]
for $x^2> \frac{3+2}{(\frac{6}{\sqrt2}-3)-(3-\frac{3}{\sqrt2})}= \frac{5\sqrt2}{9-6\sqrt2 }\cong 13.7$. Thus $k<1$ for  
$x\ge 4$. So this case is impossible.
\end{itemize}

\smallskip
{ \bf 2nd Row}: 
Here $x\ge 3$ and $x^2-1=2y^2$. In particular, $x$ is odd. Moreover,  $x$ is not a multiple of 3 if and only if $y$ is  a multiple of 3.  
The perimeter is 
$6x^2$, and the area is $6xy$. So 
by \eqref{E:b},
\[
c=\frac{f(3x^2-3xy)}{6xy-f}=\frac{3fx(x-y)}{6xy-f}.
\]

\begin{itemize}[leftmargin=*] 
\item If $f=3$, we have  $c= \frac{3x(x-y)}{2xy-1}$. 
As $x$ is not a multiple of 3 if and only if $y$ is  a multiple of 3, so $2xy-1$ is not divisible by 3.  Furthermore, $2xy-1$ is not divisible by  prime divisor of $x$.
So as $2xy-1$ divides $3x(x-y)$, necessarily $2xy-1$ divides $x-y$, say
$x-y=k(2xy-1)$,
for some positive integer $k$. So $x(2ky-1) =k-y$ and thus $x =\frac{k-y}{2ky-1}\le \frac{k-1}{2k-1}\le 1$, contradicting the assumption that $x\ge 3$.

\item If $f=3x^2-2$, the value $f$ is odd, and not divisible by $3$ or any divisor of $x$. Let $z$ be a prime common divisor of $f$ and $6xy-f$. 
 So $z$ divides $y$. But $x^2-1=2y^2$, so $z$ divides  $x^2-1$ and $f=3x^2-2$, which is impossible as $z$ does not divide  $x$. Hence $\gcd(f,6xy-f)=1$. Thus, as $c$ is an integer, $6xy-f$ divides $3x^2-3xy$. Since $f$ is odd and not divisible by $3$ or any divisor of $x$, so $6xy-f$ divides $x-y$, say
$x-y=k(6xy-3x^2+2)$,
for some positive integer $k$. Now $2y>x$ (as $4y^2>2y^2+y^2 = x^2-1+y^2\ge x^2$), so
\[
k=\frac{x-y}{6xy-3x^2+2}<\frac{x-y}{6xy-3x^2}=\frac{x-y}{3x(2y-x)}\le \frac{x-y}{3x}< 1,
\]
which is impossible.

\item If $f=3x^2-1$, the value $f$ is even (as $x$ is odd), and not divisible by $3$ or any divisor of $x$. Moreover, observing the previous equality in $\Z_4$ implies  $f$ is exactly divisible by 2. Let $z$ be an odd prime common divisor of $f$ and $6xy-f$. 
So $z$ divides $y$. But $x^2-1=2y^2$, so $z$ divides  $x^2-1$ and $f=3x^2-1$, which is impossible as $z$ does not divide  $x$. Hence $\gcd(f,6xy-f)=2$. Thus, as $c$ is an integer, $3xy-f/2$ divides $3x^2-3xy$. Since $f$ is  not divisible by $3$ or any divisor of $x$, neither is $3xy-f/2$, so $3xy-f/2$ divides $x-y$, say
$2(x-y)=k(6xy-3x^2+1)$,
for some positive integer $k$. Now $2y>x$ so
\[
\frac{2(x-y)}{6xy-3x^2+1}<\frac{2(x-y)}{6xy-3x^2}=\frac{2(x-y)}{3x(2y-x)}\le \frac{2(x-y)}{3x}< 1,
\]
which is impossible.
\end{itemize}

\smallskip
{ \bf 3rd Row}: 
Here $x\ge 5$ and $x^2+2=3y^2$, which observed in $\Z_4$ gives $x$ and $y$ are both odd, and $x$ is not divisible by 3.
 The perimeter is 
$4x^2+8$, and the area is $6xy$. So 
by \eqref{E:b},
\[
c=\frac{f(2x^2+4-3xy)}{6xy-f}.
\]

\begin{itemize}[leftmargin=*]
\item $f=4$, we have  $c=\frac{2(2x^2+4-3xy)}{3xy-2}$.
As $x,y$ are both odd, $3xy-2$ is odd, and thus $3xy-2$ divides $2x^2+4-3xy$, say
$2x^2+4-3xy=k(3xy-2)$,
for some positive integer $k$. Using $y> x/\sqrt{3}$, Lemma \ref{L:bd} gives
\[
\frac{2x^2+4-3xy}{3xy-2}<\frac{(2-\sqrt{3})x^2+4}{\sqrt{3}x^2-2}<1,
\]
for $x^2> \frac{4+2}{\sqrt{3}-(2-\sqrt{3})}=\frac{6}{2\sqrt3-2}\cong 4.1$. Thus $k<1$ for $x\ge 3$, contradicting the assumption that $k$ is a positive integer.

\item If $f=2x^2+1$, the value $f$  is odd, divisible by $3$ but not by any divisor of $x$.
Let $z>3$ be a prime common divisor of $f$ and $6xy-f$. So $z$ divides  $y$. But $x^2+2=3y^2$, so $z$ divides  $x^2+2$ and $f=2x^2+1$, which is impossible as $z$ does not divide  $x$. Hence $\gcd(f,6xy-f)=3$. Thus, as $c$ is an integer, $2xy-f/3$ divides $2x^2+4-3xy$, say
$3(2x^2+4-3xy)=k(6xy-2x^2-1)$,
for some positive integer $k$. Using $y> x/\sqrt{3}$, Lemma \ref{L:bd} gives
\[
\frac{3(2x^2+4-3xy)}{6xy-2x^2-1}<\frac{3((2-\sqrt3)x^2+4)}{(2\sqrt3-2)x^2-1}<1
\]
for $x^2> \frac{12+1}{(2\sqrt3-2)-3(2-\sqrt3)}=\frac{13}{5\sqrt3-8}\cong 19.69$. Thus $k<1$ for $x\ge 5$, contradicting the assumption that $k$ is a positive integer.

\item If $f=2x^2+3$, the value $f$ is odd, and is not divisible by $3$ ($x$ is not divisible by 3) or any divisor of $x$.
Let $z$ be a prime common divisor of $f$ and $6xy-f$. So $z$ divides $y$. But $x^2+2=3y^2$, so $z$ divides  $x^2+2$ and $f=2x^2+3$, which is impossible as $z$ does not divide  $1$. Hence $\gcd(f,6xy-f)=1$. Thus $2xy-f/3$ divides $2x^2+4-3xy$, say
$2x^2+4-3xy=k(6xy-2x^2-3)$,
for some positive integer $k$. Using $y> x/\sqrt{3}$, Lemma \ref{L:bd} gives
\[
\frac{2x^2+4-3xy}{6xy-2x^2-3}<\frac{(2-\sqrt3)x^2+4}{(2\sqrt3-2)x^2-3}<1,
\]
for $x^2> \frac{4+3}{(2\sqrt3-2)-(2-\sqrt3)}=\frac{7}{3\sqrt3-4}\cong 5.85$. Thus $k<1$ for $x\ge 3$, contradicting the assumption that $k$ is a positive integer.
\end{itemize}

\smallskip
{ \bf 4th Row}: 
Here $x\ge 2$ and $x^2-1=3y^2$. Note $x$ is not divisible by 3. Furthermore, $x,y$ have opposite parity, so $xy$ is even. 
 The perimeter is 
$8x^2$, and the area is $12xy$. So 
by \eqref{E:b},
\[
c=\frac{f(4x^2-6xy)}{12xy-f}=\frac{2fx(2x-3y)}{12xy-f}.
\]

\begin{itemize}[leftmargin=*]
\item If $f=4$, we have  $c=\frac{2x(2x-3y)}{3xy-1}$. 
Now $3xy-1$ is odd and not divisible by any prime divisor of $x$. So $3xy-1$ divides $2x-3y$, say
$2x-3y=k(3xy-1)$,
for some positive integer $k$. Thus $x=\frac{k-3y}{3ky-2}<  \frac{k}{3k-2}\le 1$, contradicting the assumption that $x\ge 2$.

\item If $f=4x^2-3$, the value
$f$ is odd and not divisible by 3 (as $x$ is not divisible by 3) or any of the prime divisors of $x$. Let $z$ be a prime common divisor of $f$ and $12xy-f$. So $z$ divides $y$.
But $x^2-1=3y^2$, so $z$ divides  $x^2-1$ and $f=4x^2-3$, which is impossible as $z$ does not divide  $x$. Hence $\gcd(f,12xy-f)=1$. Thus,  as $c$ is an integer and $12xy-f$ is odd and $\gcd(x,12xy-f)=1$, we have that $12xy-f$ divides $2x-3y$, say
$2x-3y=k(12xy-4x^2+3)$,
for some positive integer $k$. 
Now $2y\ge x$ (as $4y^2=3y^2+y^2=x^2-1+y^2\ge x^2$), so
\[
k=\frac{2x-3}{12xy-4x^2+3}<\frac{2x}{2x^2+3}<1,
\]
for $x\ge 1$. So this is impossible.

\item If $f=4x^2-1$, the value $f$ is odd. It is divisible by 3, but not by any of the prime divisors of $x$. Let $z>3$ be a prime common divisor of $f$ and $12xy-f$. So $z$ divides $y$.
But $x^2-1=3y^2$, so $z$ divides  $x^2-1$ and $f=4x^2-1$, which is impossible as $z$ does not divide  $x$. Hence $\gcd(f,12xy-f)=3$. Thus, as $c$ is an integer and $12xy-f$ is odd and $\gcd(x,12xy-f)=1$, $(12xy-f)/3$ divides $2x-3y$, say
$3(2x-3y)=k(12xy-4x^2+1)$,
for some positive integer $k$. 
Using $2y\ge x$ again, 
\[
k=\frac{3(2x-3y)}{12xy-4x^2+1}<\frac{3x}{4x^2+2}<1,
\]
for $x\ge 1$. So this case is also impossible.
\end{itemize}

This completes the proof of Theorem \ref{T:traps}.


\section{Cyclic quadrilaterals; Proof of Theorem \ref{Th4}}\label{S:LECQs}

Consider an  equable cyclic quadrilateral with integer sides $a\le b\le c \le d$. By Brahmagupta's formula's,
\begin{equation}\label{E:bg}
(-a+b+c+d)(a-b+c+d)(a+b-c+d)(a+b+c-d)=16(a+b+c+d)^2.
\end{equation}
Notice each sum above is of same parity and hence even.
Let $2z=-a+b+c+d, 2y=a-b+c+d, 2x=a+b-c+d, 2w=a+b+c-d$ and note by elementary geometry  $0<w\le x\le y\le z$. Hence  
\begin{equation}\label{E:bg2}
wxyz=(w+x+y+z)^2.
\end{equation}
Conversely,
\begin{align}\label{E:abcd}
\begin{split}
a&=\frac12(-w+x+y+z), \quad c=\frac12(w-x+y+z),\\
c&=\frac12(w+x-y+z), \quad\ \ d=\frac12(w+x+y-z).
\end{split}
\end{align}

Since $w\le x\le y\le z$, one has $w+x+y-z=2d>0$, and so
\begin{equation}\label{E:z3y}
z< w+x+y.
\end{equation}
From \eqref{E:bg2},
\[
z=\frac{wxy-2(w+x+y)\pm \sqrt{w^2x^2y^2-4wxy(w+x+y)}}{2}.
\]
In particular, $w^2x^2y^2-4wxy(w+x+y)\ge 0$, so
\begin{equation}\label{E:pos}
wxy\ge 4(w+x+y).
\end{equation}
Suppose for the moment that $z$ is given by the positive square root. Then as $z\le w+x+y$, we would have
\[
z=\frac{wxy-2(w+x+y)+ \sqrt{w^2x^2y^2-4wxy(w+x+y)}}{2} \le w+x+y,
\]
so 
$\sqrt{w^2x^2y^2-4wxy(w+x+y)} \le 4(w+x+y) -wxy \le 0$, by \eqref{E:pos}. So the positive square root only gives a solution when the square root is zero,  and this is encompassed by the case where $z$ is given by the negative square root.
 So we may assume that
\begin{equation}\label{E:z}
z=\frac{wxy-2(w+x+y)- \sqrt{w^2x^2y^2-4wxy(w+x+y)}}{2}.
\end{equation}
Then $y\le z$ gives
$\sqrt{w^2x^2y^2-4wxy(w+x+y)} \le wxy-2(w+x+2y)$.
Squaring 
and simplifying gives
\begin{equation}\label{E:ine}
wxy^2\le(w+x+2y)^2.
\end{equation}
Consequently, as $w\le x\le y$, we have $wxy^2\le  (4y)^2$, so $wx\le16$.
Hence $w^2\le wx\le 16$. In particular $w\le 4$.  Rewriting (\ref{E:ine}) and using the previous we get 
$wx>4$ and 
\[
y\le \frac{w+x}{\sqrt{wx}-2}\le \frac{4+16}{\sqrt{5}-2}< 85.
\] 
There are 63 triples $(w,x,y)$ satisfying $w\le x\le y\le 84$ with  $5\le wx\le16$.
Testing each possibility by replacing it in (\ref{E:z}), and checking if $z$ is an integer and $y\le z<w+x+y$,  we obtain only four solutions for $(w,x,y,z)$, namely
\[
(1,9,10,10),\ (2,5,5,8),\ (3,3,6,6),\ (4,4,4,4),
\]
which by \eqref{E:abcd}, give the corresponding four solutions announced for $(a,b,c,d)$:
\begin{equation}\label{E:sols}
(14,6,5,5),\ (8,5,5,2),\ (6,6,3,3),\ (4,4,4,4).
\end{equation}
It remains to consider the possible ordering of the sides. For this, we use the hypothesis that the quadrilateral is a LEQ. The numbers $2^2,3^2,4^2,6^2,8^2,14^2$  cannot be written as the sum of two nonzero squares, so the corresponding sides are each either vertical or horizontal. Thus the last two cases are obviously the $3\times 6$ rectangle and the $4\times 4$ square, while it is easy to see that the first two cases are necessarily  isosceles trapezoids, arranged either vertically or horizontally, and hence congruent to those of Figure~\ref{F:it510}. This completes the proof of Theorem \ref{Th4}.

\begin{rem}
To put Theorem \ref{Th4} in context, note that there are cyclic equable quadrilaterals whose sides lengths, in cyclic order, are any of the permutations of the lists in \eqref{E:sols}. For example, the equable  kite with side length 6,3,3,6 is cyclic, but cannot be realized as a LEQ; see Figure~\ref{F:36}.
\end{rem}


\section{Proof of Theorem~\ref{T:irrat}}\label{S:irratdiag}

Let $a,b,c,d$ denote the lengths of $OA,AB,BC,CO$ respectively.
Consider a diagonal that is internal to $OABC$. By relabelling the vertices if necessary, we may take this diagonal to be $OB$. Assume that $p=OB$ is rational. So, since $p$ is the distance between lattice points, $p$ is an integer. Moreover, the area of triangles $OAB$ and $OBC$ are rational, as they are given by 
$\frac12\Vert \overrightarrow{OA}\times\overrightarrow{OB}\Vert$ and $\frac12\Vert \overrightarrow{OB}\times\overrightarrow{OC}\Vert$
respectively.
Hence by Lemma~\ref{L:heron}, both triangles have integer area. So they are Heronian triangles. 

Notice that $p>4$. Indeed, let $h_A,h_C$ denote the respective altitudes of the triangles $OAB,OBC$ from the side $OB$. So $a+b>2h_A$ and $c+d>2h_C$. Thus
\[
a+b+c+d=K=K_A+K_C=\frac12(h_A+h_C)p < \frac{p}4(a+b+c+d).
\]

Now observe that one of triangles $OAB,OBC$ is perimeter dominant, as otherwise one would have
\[
a+b+c+d=K=K_A+K_C\ge (a+b+p)+(c+d+p),
\]
which is impossible. Assume without loss of generality that $OAB$ is perimeter dominant. For the reader's convenience, we recall the possibilities for perimeter dominant  Heronian triangles in Table~\ref{F:pdh3}. We treat each row in order, and consider the possibilities given that $p>4$. Our strategy is to show that in each case, the triangle $BCO$ is also perimeter dominant. For this, we use the following equation,
\begin{equation}\label{E:pd}
\Delta_C :=P_C-K_C=K_A+p-a-b,
\end{equation}
which holds as $P_C=p+c+d$ and $K_C=K-K_A=a+b+c+d-K_A$, since $OABC$ is equable. Then when we consider row $i$ for $OAB$, we need only consider rows $\ge i$ for $BCO$. To avoid confusion, when we consider $BCO$, we relabel the variables in these rows $u,v$ instead of $x,y$.

\begin{table}[H]
\resizebox{\columnwidth}{!}{%
\begin{tabular}{c|c|c|c|c}
  \hline
   Side Lengths & Restrictions& Peri. $P$ &Area $K$& $\Delta=P-K$ \\ \hline
(3,4,5) &-&  12&6 &6\\
(5,5,6) &-& 16&12&4 \\
(5,5,8) &-& 18&12&6\\
$(3,3x^2+1,3x^2+2)$ & $x\ge 7,\ x^2+1=2y^2$& $6x^2+6$& $6xy$ &$6(x^2-xy+1)$ \\
$(3,3x^2-2,3x^2-1)$  & $x\ge 3,\ x^2-1=2y^2$& $6x^2$& $6xy$  &$6(x^2-xy)$\\
$(4,2x^2+1,2x^2+3)$   & $x\ge 5,\ x^2+2=3y^2$& $4x^2+8$& $6xy$& $4x^2-6xy+8$\\
$(4,4x^2-3,4x^2-1)$    & $x\ge 2,\ x^2-1=3y^2$& $8x^2$& $12xy$  & $8x^2-12xy$\\
\hline
\end{tabular}
}
\caption{Perimeter-dominant Heronian triangles}\label{F:pdh3}
\end{table}

{\bf Row 1.} Here $K_A=6$ and $p=5$ and $\{a,b\}=\{3,4\}$. From \eqref{E:pd},
\[
\Delta_C=K_A+p-a-b=4.
\]
So $BCO$ is  perimeter dominant. Consultation of Table~\ref{F:pdh3} shows that the only perimeter dominant  Heronian triangle having a side of length 5 with perimeter minus area equal to 4 is the 5,5,6 triangle. Thus $OABC$ has sides 3,4,5,6, being composed of a 3,4,5 right angled triangle joined along its hypotenuse with a 5,5,6 triangle. It follows that $OABC$ is the right trapezoid LEQ shown on the left of Figure~\ref{F:lert}, which indeed has a diagonal of length 5.
 
 
 {\bf Row 2.} Here $K_A=12$. If $p=5$, then  $\{a,b\}=\{5,6\}$ and from \eqref{E:pd},
\[
\Delta_C=K_A+p-a-b=6.
\]
So $BCO$ is perimeter dominant. From Table~\ref{F:pdh3}, there is only perimeter dominant  Heronian triangle in rows $\ge 2$ having a side of length 5 with perimeter minus area equal to 6, namely the 5,5,8 triangle. Hence $OABC$ has two consecutive sides of length 5,6, two other consecutive sides of length 5,8 and those of length 6 and 8 must be horizontal or vertical, in particular either parallel or perpendicular to one another. Consideration of the possibilities leads one to the 6,8,10 right angle triangle with a vertex in the middle of the hypotenuse; we eliminate this possibility as failing the definition of a quadrilateral.

  If $p=6$, then  $\{a,b\}=\{5,5\}$ and
\[
\Delta_C=K_A+p-a-b=8.
\]
So $BCO$ is perimeter dominant. From Table~\ref{F:pdh3}, there is no perimeter dominant  Heronian triangle having a side of length 6 with perimeter minus area equal to 8. So this case is impossible.


{\bf Row 3.} Here $K_A=12$.  If $p=5$, then  $\{a,b\}=\{5,8\}$ and
\[
\Delta_C=K_A+p-a-b=4.
\]
So $BCO$ is perimeter dominant. But from Table~\ref{F:pdh3}, there is no perimeter dominant  Heronian triangle in rows $\ge 3$ having a side of length 5 with perimeter minus area equal to 4.

If $p=8$, then  $\{a,b\}=\{5,5\}$ and
\[
\Delta_C=K_A+p-a-b=10.
\]
So $BCO$ is perimeter dominant. But from Table~\ref{F:pdh3}, there is no perimeter dominant  Heronian triangle having a side of length 8 with perimeter minus area equal to 10. So this case is also impossible.


{\bf Row 4.} Here $K_A=6xy$, where $x\ge 7$ and $x^2+1=2y^2$. If  $p=3x^2+1$, then $\{a,b\}=\{3,3x^2+2\}$ and 
\[
\Delta_C=K_A+p-a-b= 6xy+3x^2+1-3-3x^2-2=6xy-4.
\]
So $BCO$ is perimeter dominant. We again examine Table~\ref{F:pdh3}. Notice that $P_C-K_C\equiv 2\pmod 6$, so we need only consider the bottom 2 rows.   For $BCO$ in row 6, 
for the perimeter minus area, $4u^2+8-6uv$, to be congruent to $2\pmod 6$, we would require $u$ to be a multiple of 3, which is impossible because $u^2+2=3v^2$. It remains to consider row 7. Suppose  $OBC$ has side lengths $4,4u^2-3,4u^2-1$, with $u\ge 2$ and $ u^2-1=3v^2$. So either $p=3x^2+1=4u^2-3$ or $3x^2+1=4u^2-1$. But in both cases  $x$ must be even, which  is impossible as $x^2+1=2y^2$.

If  $p=3x^2+2$, then $\{a,b\}=\{3,3x^2+1\}$ and
\[
\Delta_C=K_A+p-a-b= 6xy+3x^2+2-3-3x^2-1=6xy-2.
\]
So $BCO$ is perimeter dominant. Notice that now $P_C-K_C\equiv 4\pmod 6$, so we need again only consider rows 6 and 7 of Table~\ref{F:pdh3}. For $BCO$ in row 6, 
for the perimeter minus area, $4u^2+8-6uv$, to be congruent to $4\pmod 6$, we would require $4u^2\equiv 2\pmod 6$, or equivalently, $2u^2\equiv 1\pmod 3$. But this is impossible. For $BCO$ in row 7, for the perimeter minus area, $8u^2-12uv$, to be congruent to $4\pmod 6$, we require $2u^2\equiv 4\pmod 6$, or equivalently, $u^2\equiv 2\pmod 3$. But this is again impossible.


{\bf Row 5.} Here $K_A=6xy$, where $x\ge 3$ and $x^2-1=2y^2$, in particular $x$ is odd.  If  $p=3x^2-2$, then $\{a,b\}=\{3,3x^2-1\}$ and 
\[
\Delta_C=K_A+p-a-b= 6xy+3x^2-2-3-3x^2+1=6xy-4.
\]
Hence $BCO$ is perimeter dominant and  $\Delta_C\equiv 2 \pmod{6}$ so we consider only the two last rows. If $\Delta_C=4u^2-6uv+8\equiv 2 \pmod{6}$, then $u$ is a multiple of 3, which is impossible because $u^2+2=3v^2$. Otherwise, either $3x^2-2=4u^2-3$ or $3x^2-2=4u^2-1$, with both $x^2-1=2y^2$ and $u^2-1=3v^2$. By substituting $x$ and $u$ respectively by $y$ and $v$ we get in the first case $y^2=2v^2$ a contradiction,  and in the second   $3y^2-6v^2=1$ another contradiction. 

If  $p=3x^2-1$, then$\{a,b\}=\{3,3x^2-2\}$ and 
\[
\Delta_C=K_A+p-a-b= 6xy+3x^2-1-3-3x^2+2=6xy-2.
\]
Once again, $BCO$ is perimeter dominant,  $\Delta_C\equiv 4 \pmod{6}$, and $4u^2+8-6uv\equiv 4\pmod{6}$ implies $2\equiv u^2\pmod{3}$ a contradiction. Otherwise, either $3x^2-1=4u^2-3$ or $3x^2-1=4u^2-1$. In the first case, $3x^2=4u^2-2$  is impossible since $x$ is odd. The second  implies $3x^2=4u^2$, another contradiction. 
 

{\bf Row 6}.  Here $K_A=6xy$, where $x\ge 5$ and $x^2+2=3y^2$. Moreover, $x$ is odd by observing  $x^2+2=3y^2$ in $\Z_4$. If  $p=2x^2+1$, then $\{a,b\}=\{4,2x^2+3\}$ and 
\[
\Delta_C=K_A+p-a-b= 6xy+2x^2+1-4-2x^2-3=6xy-6.
\]
So $BCO$ is perimeter dominant. Since $x^2+2=3y^2$, $x$ is not divisible by 3, so $p=2x^2+1$ is divisible by 3. We consider the rows 6 and 7 of  Table~\ref{F:pdh3}.
For $BCO$ in row 6,  $u^2+2=3v^2$ so $u$ is not divisible by 3. Thus $p$ cannot equal the side  $2u^2 +3$. If $p=2u^2+1$, then $p=2x^2+1=2u^2+1$, so $x=u$, and thus $y=v$.
But then, as the perimeter minus area is $6xy-6=4u^2+8-6uv$, we have $ 6uv-2u^2=7$, which is impossible.

 For $BCO$ in row 7, as the perimeter minus area is divisible by 6, we have $8u^2-12uv\equiv0\pmod 6$, so $u$ must be a multiple of 3. But this is impossible as 
$u^2-1=3v^2$. 

If  $p=2x^2+3$, then $\{a,b\}=\{4,2x^2+1\}$ and 
\[
\Delta_C=K_A+p-a-b= 6xy+2x^2+3-4-2x^2-1=6xy-2.
\]
Once again, $BCO$ is perimeter dominant,  $\Delta_C\equiv 4 \pmod{6}$, and $4u^2+8-6uv\equiv 4\pmod{6}$ implies $2\equiv u^2\pmod{3}$ a contradiction. Otherwise, either $2x^2+3=4u^2-3$ or $2x^2+3=4u^2-1$. In the first case, $3 = 2u^2-x^2$ is impossible since $3\nmid ux$. The second  implies $2=2u^2-x^2$, another contradiction as $x$ is odd.


{\bf Row 7}.  Here $K_A=12xy$, where $x\ge 2$ and $x^2-1=3y^2$. 
If   $p=4x^2-3$, then $\{a,b\}=\{4,4x^2-1\}$ and 
\[
\Delta_C=K_A+p-a-b= 12xy+4x^2-3-4-4x^2+1=12xy-6.
\]
So $BCO$ is perimeter dominant. 
For $BCO$ in row 7, as the perimeter minus area is divisible by 6, we have $8u^2\equiv0\pmod 6$, so $u$ must be a multiple of 3. But this is impossible as 
$u^2-1=3v^2$. 

This completes the proof of the theorem.

\bibliographystyle{amsplain}
{}

\bigskip

\bigskip

\bigskip

COLL\`EGE CALVIN

GENEVA

SWITZERLAND 1211

\textit{E-mail address}: \texttt{christian.aebi@edu.ge.ch}

\bigskip

\bigskip

DEPARTMENT OF MATHEMATICS

LA TROBE UNIVERSITY

MELBOURNE

AUSTRALIA 3086

\textit{E-mail address}: \texttt{G.Cairns@latrobe.edu.au}\bigskip


\end{document}